\let\ORIlabel\label
\let\ORIrefstepcounter\refstepcounter
\AddToHook{package/hyperref/before}{%
  \let\label\ORIlabel
  \let\refstepcounter\ORIrefstepcounter
}

\documentclass[onefignum,onetabnum]{siamonline171218}

\usepackage{mathpazo}
\usepackage{mathtools}
\usepackage{lipsum}
\usepackage{amsfonts}
\usepackage{graphicx}
\usepackage{epstopdf}
\usepackage{algorithmic}
\usepackage{amssymb}
\ifpdf
  \DeclareGraphicsExtensions{.eps,.pdf,.png,.jpg}
\else
  \DeclareGraphicsExtensions{.eps}
\fi
\newcommand{\dd}{\mathrm{d}}
\newcommand{\ee}{\mathrm{e}}
\newcommand{\ii}{\mathrm{i}}
\renewcommand{\varepsilon}{\epsilon}
\usepackage{enumitem}
\setlist[enumerate]{leftmargin=.5in}
\setlist[itemize]{leftmargin=.5in}

\newsiamremark{remark}{Remark}
\newsiamremark{hypothesis}{Hypothesis}
\crefname{hypothesis}{Hypothesis}{Hypotheses}
\newsiamthm{claim}{Claim}
\newsiamthm{example}{Example}
\newsiamthm{siamprop}{Proposition}

\headers{Linearized Self-Similar Universality for Benjamin-Ono}{L. Gassot, P. G\'erard, and P. D. Miller}

\title{The Benjamin-Ono Equation in the Long-Time Limit:  Linearized Self-Similar Universality
}

\author{Louise Gassot\thanks{CNRS and Department of Mathematics, University of Rennes, Rennes, France
	(\email{louise.gassot@cnrs.fr}).}
 \and
 Patrick G\'erard\thanks{Department of Mathematics, Universit\'e Paris Saclay, Paris, France (\email{patrick.gerard@universite-paris-saclay.fr}).}
\and Peter D. Miller\thanks{Department of Mathematics, University of Michigan, Ann Arbor, MI 
  (\email{millerpd@umich.edu}).}
}

\usepackage{subcaption}

\usepackage{amsopn}

\makeatletter
\newcommand*{\addFileDependency}[1]{
  \typeout{(#1)}
  \@addtofilelist{#1}
  \IfFileExists{#1}{}{\typeout{No file #1.}}
}
\makeatother

\ifpdf
\hypersetup{
  pdftitle={The Benjamin-Ono Equation in the Long-Time Limit:  Linearized Self-Similar Universality},
  pdfauthor={Louise Gassot and Patrick Gérard and Peter D. Miller}
}
\fi

\usepackage{tikz}
\usetikzlibrary{arrows.meta}
\usetikzlibrary{decorations.pathmorphing}
\usetikzlibrary{decorations.markings}
\usetikzlibrary{patterns}
\usetikzlibrary{automata}
\usetikzlibrary{positioning}
\usepackage{tikz-cd}

\tikzset{->-/.style={decoration={markings,mark=at position #1 with {\arrow[thick]{>}}},postaction={decorate}}}
	
\tikzset{-<-/.style={decoration={markings,mark=at position #1 with{\arrow[thick]{<}}},postaction={decorate}}}

\title{The Benjamin-Ono Equation in the Long-Time Limit:  Linearized Self-Similar Universality}

\begin{document}

\maketitle

\begin{abstract}
We obtain the leading term in the solution of the Cauchy problem for the Benjamin-Ono equation in the limit $t\to+\infty$ with $x=O(t^{1/2})$.  We show that the rate of decay exceeds that of self-similar solutions and obtain an explicit universal profile for the decaying solution, relating it to the linearization of the profile equation for self-similar solutions.  The proof assumes a class of rational initial data $u_0$ in $L^2(\mathbb{R})\cap L^1(\mathbb{R})$ that exhibit generic behavior of the reflection coefficient at the origin.  
\end{abstract}

\begin{keywords}
  Benjamin-Ono equation, self-similar solutions, long-time behavior
\end{keywords}

\begin{AMS}
    35Q53, 37K15
\end{AMS}

\section{Introduction}
It is by now well known \cite{AblowitzS77,DeiftVZ94} that for the Korteweg-de Vries (KdV) equation in the form 
\begin{equation}
    \partial_t u-3\partial_x u^2+\partial_x^3u=0
    \label{eq:KdV}
\end{equation}
with Schwartz-class initial data $u_0$, the long time behavior of the solution consists mainly of a dispersive region where $x/t<-\delta$ (the Zakharov-Manakov or similarity region) and a soliton region where $x/t>\delta$ for some $\delta>0$.

According to \cite{AblowitzS77,DeiftVZ94}, the behavior of the KdV solution for large $t$ when $x/t$ is small is rather complicated, and part of the story involves self-similar solutions of \eqref{eq:KdV}.  As was first worked out carefully in \cite{SegurA81}, these are based on the ansatz $u(t,x)=bt^pU(\xi)$, with a similarity variable $\xi=x/(at^q)$.  Substitution into \eqref{eq:KdV} results in 
\begin{equation}
bpt^{p-1}U(\xi)-bq t^{p-1}\xi U'(\xi)-6\frac{b^2}{a}t^{2p-q}U(\xi)U'(\xi)+\frac{b}{a^3}t^{p-3q}U'''(\xi)=0.
\end{equation}
Balancing the powers of $t$ gives $p=-\frac{2}{3}$ and $q=\frac{1}{3}$.  Furthermore choosing $a=3^{1/3}$ and $b=3^{-2/3}$ for convenience yields the profile equation
\begin{equation}
\xi U'(\xi) + 2U(\xi)+6U(\xi)U'(\xi)-U'''(\xi)=0.
\label{eq:KdV-profile}
\end{equation}
To solve this equation, one can introduce the Miura transformation $U(\xi)=V(\xi)^2 + V'(\xi)$, leading to
\begin{equation}
\left(2V(\xi) + \frac{\dd}{\dd\xi}\right)\frac{\dd}{\dd\xi}\left(V''(\xi)-2V(\xi)^3-\xi V(\xi)\right)=0.
\end{equation}
Thus, whenever $V(\xi)$ is a solution of the homogeneous Painlev\'e-II equation in the form
\begin{equation}
    V''(\xi)-2V(\xi)^3-\xi V(\xi)=0,
\label{eq:PII}
\end{equation}
the profile $U(\xi)=V(\xi)^2+V'(\xi)$ satisfies \eqref{eq:KdV-profile} and hence determines a self-similar solution $u(t,x)=(3t)^{-2/3}U(x/(3t)^{1/3})$ of \eqref{eq:KdV}.

In the asymptotic regime $t\to+\infty$ with $\xi=x/(3t)^{1/3}$ bounded (so $x/t=O(t^{-2/3})$), it is known that the solution $u(t,x)$ of \eqref{eq:KdV} is accurately approximated by a self-similar solution based on different solutions of the Painlev\'e-II equation \eqref{eq:PII} depending on the value at $\lambda=0$ of the reflection coefficient $r(\lambda)$ from the direct scattering problem (stationary Schr\"odinger equation) with potential $u_0$.  In the generic case one has $r(0)=-1$, and the relevant solution of \eqref{eq:PII} is universally given by the \emph{Hastings-McLeod} solution $V=V_\mathrm{HM}(\xi)$.  This solution is distinguished by its asymptotic behavior as $\xi\to+\infty$, wherein $V_\mathrm{HM}(\xi)\sim -\mathrm{Ai}(\xi)$.  In the opposite limit one has instead that $V_\mathrm{HM}(\xi)$ grows proportional to $\sqrt{-\xi}$ as $\xi\to-\infty$.  In the non-generic case one has instead a strict inequality $|r(0)|<1$, and the relevant solution of \eqref{eq:PII} is one of the \emph{Ablowitz-Segur} solutions $V=V_\mathrm{AS}(\xi)$ determined by the asymptotic $V_\mathrm{AS}(\xi)\sim r(0)\mathrm{Ai}(\xi)$ as $\xi\to+\infty$.  By contrast with the Hastings-McLeod solution, $V_\mathrm{AS}(\xi)$ exhibits oscillatory decay as $\xi\to -\infty$ qualitatively similar to that of a multiple of $\mathrm{Ai}(\xi)$.

In the generic case $r(0)=-1$, there is also a somewhat larger region called the \emph{collisionless shock region}, wherein $u(t,x)$ is described by a modulated nonlinear wavetrain of diminishing amplitude proportional to $(\ln(t)/t)^{2/3}$.  This region can be found between the dispersive region $x<-\delta t$ and the self-similar region $|x|=O(t^{-1/3})$, and it occupies scales where $x$ is negative and proportional to $t^{1/3}\ln(t)^{2/3}$.  In the non-generic case $|r(0)|<1$ there is no need for the collisionless shock region and the self-similar and dispersive approximations formally match.  The known asymptotic results omit certain thin transitional layers between the four regions, and there is particular interest in resolving the solution of \eqref{eq:KdV} between the collisionless shock region and the self-similar region in the generic case.

We now turn to the large-time limit for the Benjamin-Ono (BO) equation instead:
\begin{equation}
\partial_t u + \partial_x(u^2)=\partial_x|D|u,\quad \widehat{|D|u}(\diamond)=|\!\diamond\!|\cdot\widehat{u}(\diamond).  
\label{eq:XS-BO}
\end{equation}
The asymptotic behavior of solutions of this equation in regions corresponding to the dispersive and soliton regions has only recently been settled \cite{GassotGM26}.  The behavior of the solution in these two regions, again given by $x<-\delta t$ and $x>\delta t$ for some arbitrarily small fixed $\delta>0$, is sufficient to globally characterize the solution as $t\to+\infty$ in the $L^2(\mathbb{R})$ sense.  However, the question of universality of asymptotic behavior in the regime $t\to+\infty$ with $x/t$ small has not been addressed.  Based on the KdV problem and other integrable equations, one might expect self-similar solutions to play an important role.  To characterize them for BO, we substitute the ansatz $u(t,x)=bt^pU(\xi)$ with $\xi:=x/(at^q)$ into \eqref{eq:XS-BO} and accounting for the scale invariance of the Hilbert transform one obtains 
\begin{equation}
bpt^{p-1}U(\xi)-bqt^{p-1}\xi U'(\xi) +\frac{2b^2}{a}t^{2p-q}U(\xi)U'(\xi) =\frac{b}{a^2}t^{p-2q}\frac{\dd}{\dd\xi}|D|U(\xi),
\end{equation}
where $|D|$ now denotes a Fourier multiplier with respect to the $\xi$-variable.
Matching the powers of $t$ now requires $p=-\frac{1}{2}$ and $q=\frac{1}{2}$.  Then taking $a=2$ and $b=1$ for convenience, one arrives at the profile equation
\begin{equation}
    -2U(\xi)-2\xi U'(\xi) + 4U(\xi)U'(\xi)=\frac{\dd}{\dd \xi}|D|U(\xi).
\label{eq:BO-profile}
\end{equation}
For profiles $U(\xi)$ that tend to zero sufficiently rapidly in one or the other limit $\xi\to\pm\infty$ we may integrate with zero integration constant to obtain
\begin{equation}
    -2\xi U(\xi) +2U(\xi)^2 = |D|U(\xi).
    \label{eq:integrated-BO-profile}
\end{equation}
Therefore, one might expect that in the limit $t\to+\infty$ with $x=O(t^{1/2})$, the solution $u(t,x)$ of \eqref{eq:XS-BO} for a large class of initial data would be accurately approximated by a self-similar solution $u=t^{-1/2}U(x/(2t^{1/2}))$ with $U(\xi)$ satisfying \eqref{eq:integrated-BO-profile}.  

As in the case of the Korteweg-de Vries equation, there is a dichotomy in the inverse-spectral theory of the Benjamin-Ono equation:  a given initial datum $u_0$ in a weighted $L^2$ space is either generic or non-generic.  The dichotomy again involves the reflection coefficient $\beta(\lambda)$ defined for  $\lambda>0$ now from a different (nonlocal) Lax equation. In particular the asymptotic behavior of $\beta(\lambda)$ as $\lambda\downarrow 0$ is of crucial importance.  The two alternatives are:
\begin{itemize}
    \item $u_0$ is generic if and only if 
    \begin{equation}
        \beta(\lambda)=\frac{2\pi\ii}{\ln(\lambda)}(1+o(1)),\quad\lambda\downarrow 0.
    \label{eq:XS-beta-generic}
    \end{equation}
    \item $u_0$ is nongeneric if and only if 
    \begin{equation}
        \beta(\lambda)=o\left(\frac{1}{\ln(\lambda)}\right),\quad\lambda\downarrow 0.
    \end{equation}
    The rate of decay is given as $O(\lambda)$ in \cite{KaupM98} and $O(\lambda^\epsilon\ln(\lambda))$ for some $\epsilon\in (0,1)$ in \cite{Wu17}.
\end{itemize}
In particular, all multisoliton solutions are nongeneric since they are reflectionless potentials for which $\beta(\lambda)$ vanishes identically for $\lambda>0$.

\subsection{Results}
Fix distinct points $p_1,\dots,p_N$ in the open upper half-plane and complex constants $c_1,\dots,c_N$ with the property that 
\begin{equation}
\mathrm{Re}(c_1+\cdots+c_N)=0.      
\label{eq:XS-u0-in-L1}
\end{equation}
In this paper we consider the Benjamin-Ono equation \eqref{eq:XS-BO} with rational initial data $u_0(x)=u(0,x)$ of the form
\begin{equation}
        u_0(x)=\sum_{n=1}^N\left[\frac{c_n}{x-p_n} + \frac{c_n^*}{x-p_n^*}\right],\quad x\in\mathbb{R}.
    \label{eq:XS-u0-rational}
\end{equation}
The condition \eqref{eq:XS-u0-in-L1} guarantees that $u_0\in L^1(\mathbb{R})$.  Equivalently, the sum of the residues at all of the poles vanishes, so as a function of the complex variable $z$, $u_0$ has a single-valued antiderivative $L(z)$ defined in the neighborhood of $z=\infty$ normalized so that $L(\infty)=0$.

We define a quantity $\Delta$ that depends on the pole and residue data as follows.  First, we say that an index $n=1,\dots,N$ is \emph{exceptional} if $\ii c_n$ is a negative integer:  $\ii c_n=-1,-2,-3,\dots$.  Otherwise $n$ is \emph{non-exceptional}.  We will assume without loss of generality that if there exists at least one non-exceptional index, then the poles are ordered so that $N$ is non-exceptional.  If $n$ is a non-exceptional index, we let $C_n$ denote a closed oriented Jordan curve on the Riemann sphere punctured at $p_1,\dots,p_N,p_1^*,\dots,p_N^*$ passing through $z=\infty$ and $z=0$ (interior/exterior on the left/right) such that $p_1,\dots,p_n$ lie in the interior while all other poles of $u_0$ are in the exterior.  Then we write $C_n=C_{n}^{-}\sqcup C_{n}^{+}\sqcup\{0,\infty\}$ where $C_{n}^{-}$ denotes the arc of $C_n$ originating at $z=\infty$ and terminating at $z=0$.  By the residue theorem, analytic continuation of $L(z)$ along $C_{n}$ from $z=\infty$ with initial value $L(\infty-)=0$ back to $z=\infty$ results in a limiting value of $L(\infty+)=2\pi\ii (c_1+\cdots + c_n)$. Setting 
\begin{equation}
  E_n:=\ee^{-\ii L(\infty+)} =\ee^{2\pi(c_1+\cdots+c_n)},\quad n=1,\dots,N,\quad \text{for $n$ non-exceptional}, 
\label{eq:XS-En-non-exceptional}
\end{equation}
we see that $\ee^{-\ii L(z)}$ varies from $1$ to $E_n$ as $z$ traverses $C_n$ from $z=\infty$ and back again.  If instead $n$ is exceptional, we let $C_n$ be an oriented Jordan arc on the punctured Riemann sphere from $z=\infty$ to $z=p_n$ passing through $z=0$, and we again decompose $C_n$ by cutting it at $z=0$ into $C_{n}^{-}$ followed by $C_{n}^{+}$.  In the exceptional case we require that $C_{n}^{-}\cup\mathbb{R}_+$ with orientation on $\mathbb{R}_+$ continued from $C_{n}^{-}$ is a contour with $p_1,\dots,p_N$ on its left and $p_1^*,\dots,p_N^*$ on its right, and that $C_{n}^{+}$ lies in $\mathbb{C}_+$ after its initial point $z=0$.  We set
\begin{equation}
    E_n:=0,\quad n=1,\dots,N,\quad \text{for $n$ exceptional}.
\label{eq:XS-En-exceptional}
\end{equation}
See Figure~\ref{fig:C0Cm} (center and right panels) and Figure~\ref{fig:Cm>} for illustrations of these contours.

With this setup, we define $\ee^{-\ii L(z)}$ differently on each contour $C_n$ by analytic continuation from $z=\infty$ with initial value $\ee^{-\ii L(\infty)}=1$.  Then a matrix $\mathbf{J}$ of dimension $N\times N$ is defined by 
\begin{equation}
    J_{mn}:=\int_{C_{m}^{-}}\frac{\ee^{-\ii L(z)}-1}{z-p_n}\,\dd z + \int_{C_{m}^{+}}\frac{\ee^{-\ii L(z)}-E_m}{z-p_n}\,\dd z + 2\pi\ii E_m\delta_{m\ge n},\quad m,n=1,\dots,N.
\label{eq:XS-J-def}
\end{equation}
Note that if $m$ is exceptional, then the second integral is on a path from $z=0$ to $z=p_m$ and the integrand is $\ee^{-\ii L(z)}/(z-p_n)$ which is integrable at the terminal endpoint regardless of whether or not $m=n$ because $\ee^{-\ii L(z)}$ vanishes to at least first order there.  Finally, we define $\Delta$ as
\begin{equation}
\Delta:=\sum_{n=1}^N\det(\mathbf{J}\mathop{\longleftarrow}^n\mathbf{d}),\quad\mathbf{d}:=(1-E_1,\dots,1-E_N)^\top
\label{eq:XS-DeltaN}
\end{equation}
where the notation $\mathbf{J}\displaystyle\mathop{\longleftarrow}^n\mathbf{d}$ means the matrix obtained from $\mathbf{J}$ by replacing its $n$th column with $\mathbf{d}$.  We remark that the choice of $z=0$ as the break point for the contours $C_m$ is quite arbitrary in that different choices of break point amount to adding to each column of $\mathbf{J}$ a multiple of the vector $\mathbf{d}$; hence $\Delta$ is invariant.

It is known \cite{Sun2020} that the multisoliton solutions of the Benjamin-Ono equation \eqref{eq:XS-BO} are exactly those rational functions of the form \eqref{eq:XS-u0-rational} for which  $\ii c_1,\dots,\ii c_N$ are all \emph{positive} integers.  In this situation none of the indices are exceptional, and it is straightforward to confirm from \eqref{eq:XS-En-non-exceptional} that $E_n=1$ for all $n=1,\dots,N$.  Therefore $\mathbf{d}=\mathbf{0}$ and consequently $\Delta=0$ for all such reflectionless and hence nongeneric solutions.  On the other hand, in \cite{BlackstoneGGM24a} the solution of \eqref{eq:XS-BO} for $u_0(x)=-2/(1+x^2)$ was analyzed for large $t$.  In the same work, the reflection coefficient was computed for this datum as
\begin{equation}
u_0(x)=\frac{-2}{1+x^2}\implies    \beta(\lambda)=\frac{2\pi\ii\ee^\lambda}{\mathrm{Ei}(2\lambda)-\ii\pi},\quad\lambda>0.
\label{eq:XS-beta-neg-soliton}
\end{equation}
Using the series expansion of $\mathrm{Ei}(2\lambda)$ for small $\lambda$ (see \cite[Equation 6.6.1]{DLMF}) one easily confirms that \eqref{eq:XS-beta-neg-soliton} satisfies \eqref{eq:XS-beta-generic}, so this is a generic potential.  The potential $u_0(x)=-2/(1+x^2)$ is also of the form \eqref{eq:XS-u0-rational} with $N=1$, $p_1=\ii$, and $c_1=\ii$, so the only index $n=1$ is exceptional.  Using  \eqref{eq:XS-En-exceptional} gives $E_1=0$ and hence the vector $\mathbf{d}$ amounts to the scalar $1$.  It follows from \eqref{eq:XS-DeltaN} that for this generic potential, $\Delta=1\neq 0$.    

More generally we have the following.
\begin{theorem}
    Suppose that $u_0$ is given by \eqref{eq:XS-u0-rational} subject to \eqref{eq:XS-u0-in-L1}.  Then $\beta(\lambda)$ has the form
    \begin{equation}
        \beta(\lambda)=\frac{2\pi\ii \varphi_1(\lambda)+\lambda \varphi_2(\lambda)}{\varphi_1(\lambda)\ln(\lambda)+\varphi_3(\lambda)},\quad\lambda>0,
    \label{eq:XS-rational-beta-general}
    \end{equation}
    where $\varphi_1,\varphi_2,\varphi_3$ are functions analytic at $\lambda=0$ and $\varphi_1(0)=\Delta$.  If $\Delta\neq 0$, then $u_0$ is a generic initial datum for the Benjamin-Ono equation \eqref{eq:XS-BO}.
\label{thm:XS-generic}
\end{theorem}
We give the proof in the Appendix.  We do not know whether $\Delta\neq 0$ is a necessary condition for genericity of a rational potential of the form \eqref{eq:XS-u0-rational}.  As a simple generalization of the two cases considered above, we note that if 
$u_0$ is a rational potential of the form \eqref{eq:XS-u0-rational} with $N=1$ and $c_1\in\ii\mathbb{R}\setminus\{0\}$, then $\Delta\neq 0$ and hence $u_0$ is generic \emph{unless} $c_1=-\ii n$ for $n=1,2,3,\dots$ (in which case $u_0$ is a $n$-soliton solution with $\beta(\lambda)\equiv 0$).  Indeed, either $c_1=\ii n$ for $n=1,2,3,\dots$ giving the exceptional case with $E_1=0$ and hence $\Delta=1-E_1=1$, or $c_1\in\ii(\mathbb{R}\setminus\mathbb{Z})$ giving the non-exceptional case with $E_1=\ee^{2\pi c_1}\neq 1$, so $\Delta = 1-E_1\neq 0$.

When one considers rational approximation of initial data, a natural  question is whether anything is lost by assuming the condition $\Delta\neq 0$ sufficient for genericity.  To partially address this question, we have the following result.
\begin{theorem}
    Potentials $u_0$ of the form \eqref{eq:XS-u0-rational} satisfying \eqref{eq:XS-u0-in-L1} and for which $\Delta\neq 0$ are dense in $L^2(\mathbb{R},\mathbb{R})$.
\end{theorem}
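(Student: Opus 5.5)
The plan has three stages: approximate by rational data, remove degenerate cases by a small extra pole, and then show that $\Delta$ is a nonzero analytic function of a scaling parameter $s$, so that $s u_0$ with $s$ near $1$ has $\Delta\neq 0$.

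\emph{Step 1: rational data satisfying \eqref{eq:XS-u0-in-L1} are dense.} Let $f\in L^2(\mathbb{R},\mathbb{R})$. Write $f=2\,\mathrm{Re}\,\Pi f$, where $\Pi$ is the Szeg\H{o} projector onto the Hardy space $H^2_+$ of the lower half-plane. The functions $x\mapsto (x-p)^{-1}$ with $\mathrm{Im}\,p>0$ span a dense subspace of $H^2_+$, since their orthogonal complement is trivial by the reproducing property. Hence $f$ can be approximated in $L^2$ by functions of the form \eqref{eq:XS-u0-rational}, except that $\mathrm{Re}(c_1+\cdots+c_N)$ may be nonzero.

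To repair \eqref{eq:XS-u0-in-L1}, add one new pole $p_{N+1}=\ii R$ with real residue $c_{N+1}=-\mathrm{Re}(c_1+\cdots+c_N)$. Since $\|(x-\ii R)^{-1}\|_{L^2}=O(R^{-1/2})$, this costs arbitrarily little in $L^2$ as $R\to\infty$. With one more pole of small cost, also in a generic position, we may further arrange that:
\begin{itemize}
\item every index is non-exceptional, i.e.\ no $\ii c_n$ is a negative integer;
\item $\sigma:=\mathrm{Im}(c_1+\cdots+c_N)\neq 0$.
\end{itemize}
Altering a residue by a small purely imaginary amount on a far-away pole does both.

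\emph{Step 2: $\Delta$ along the scaling family.} For fixed poles consider $u_0^{(s)}:=s\,u_0$, i.e.\ residues $s c_n$. For real $s$ this family still satisfies \eqref{eq:XS-u0-in-L1}, and $s u_0\to u_0$ in $L^2$ as $s\to 1$.

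Define $\Delta(s)$ by the formulas \eqref{eq:XS-En-non-exceptional}, \eqref{eq:XS-J-def} and \eqref{eq:XS-DeltaN}, using the non-exceptional recipe throughout. Then:
\begin{itemize}
\item The contours $C_n$ can be kept fixed, since they depend only on the poles.
\item $\ee^{-\ii sL(z)}$ is entire in $s$, locally uniformly on the contours.
\item $E_n(s)=\ee^{2\pi s(c_1+\cdots+c_n)}$ is entire in $s$.
\end{itemize}
So $\Delta(s)$ is analytic in $s\in\mathbb{C}$, at least away from the discrete set where some $\ii s c_n$ is a negative integer. Since the recipe is purely algebraic in the residues, no reality of $s$ is needed. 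That set avoids a neighbourhood of $s=1$ by Step 1, and its complement is connected. Therefore, if $\Delta\not\equiv 0$, its zeros near $s=1$ are isolated, and some real $s$ arbitrarily close to $1$ gives $\Delta(s)\neq 0$.

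\emph{Step 3: $\Delta\not\equiv 0$, via $s\to 0$.} This is the key computation and the main obstacle. As $s\to 0$:
\begin{itemize}
\item $\ee^{-\ii sL}\to 1$ and $E_m\to 1$, so both integrals in \eqref{eq:XS-J-def} are $O(s)$ and $\mathbf{J}=2\pi\ii\,\mathbf{T}+O(s)$, with $\mathbf{T}$ the lower triangular matrix of ones.
\item $d_n=-2\pi s(c_1+\cdots+c_n)+O(s^2)$.
\end{itemize}
Since $\mathbf{d}=O(s)$, multilinearity of the determinant gives, to leading order,
\[
\Delta=\det\mathbf{J}\sum_n(\mathbf{J}^{-1}\mathbf{d})_n+O(s^2).
\]
The inverse $\mathbf{T}^{-1}$ is bidiagonal, so the sum telescopes to $d_N/(2\pi\ii)$. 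This yields
\[
\Delta(s)=(2\pi\ii)^{N}\,\frac{-2\pi s(c_1+\cdots+c_N)}{2\pi\ii}+O(s^2)=-(2\pi\ii)^{N-1}\,2\pi s\,\ii\sigma+O(s^2),
\]
using $\mathrm{Re}(c_1+\cdots+c_N)=0$. This is nonzero for small $s\neq 0$ because $\sigma\neq 0$.

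The points needing care are the uniformity of the $O(s)$ bounds on the unbounded contours near $z=\infty$, where one uses $L(z)=O(z^{-1})$, and checking that the small-$s$ region lies in the same connected domain of analyticity as $s=1$. The latter holds because the excluded set is discrete. Together with Steps 1 and 2 this gives the claimed density.
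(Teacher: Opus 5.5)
Your argument is correct, but it takes a different route from the paper's. Both proofs start from the same density of rational data satisfying \eqref{eq:XS-u0-in-L1}, which the paper asserts and you sketch.

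\textbf{The paper's route.} It perturbs all residues at once. It extends $\Delta$ to an entire function $\widetilde{\Delta}$ on the complexified hyperplane $\mathbb{H}\subset\mathbb{C}^{2N}$. It proves $\widetilde{\Delta}\not\equiv 0$ by exact evaluation at the special point $c_1=\ii v_2$, $c_2=\cdots=c_N=0$. There, row operations reduce the determinant to a triangular one, giving $(1-\ee^{2\pi\ii v_2})(2\pi\ii)^{N-1}\prod_{n\ge 2}\ee^{-\ii L(p_n)}$. It then needs a several-variable step, namely the lowest-order homogeneous Taylor polynomial of $|\Delta|^2$ in real coordinates on $\mathbb{H}_\mathbb{R}$, to find a real direction along which $\Delta$ becomes nonzero.

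\textbf{Your route.} You restrict to the scaling family $su_0$. This fixes the poles and contours and keeps the total residue $s\sum_n(c_n+c_n^*)=0$ even for complex $s$. So $\Delta(s)$ is an entire function of one variable, and isolated zeros immediately give real $s$ near $1$ with $\Delta(s)\neq 0$. Nonvanishing follows from the expansion at $s=0$: there $\mathbf{J}(0)=2\pi\ii\mathbf{T}$, and Cramer's rule telescopes to $\Delta(s)=(2\pi\ii)^{N-1}d_N(s)+O(s^2)$. This is consistent with the paper's exact formula at its special point.

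\textbf{Comparison.} The price of your route is the preliminary perturbation making $\sigma\neq 0$. It is genuinely needed: if $\sigma=0$ then $d_N\equiv 0$ along the whole family and the leading term disappears. The gain is that the one-variable identity theorem replaces the multivariable real-analytic step, and the approximant is just a rescaling of $u_0$.

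\textbf{Minor remarks.}
\begin{itemize}
\item Changing the residue of the added far-away pole cannot remove an exceptional residue $c_n\in\ii\{1,2,3,\dots\}$ at one of the original poles. Perturb each $c_n$ by a small imaginary amount instead, which preserves $\mathrm{Re}\sum_n c_n=0$. Alternatively, note that for real $s\neq 1$ close to $1$ no $\ii s c_n$ is a negative integer.
\item The non-exceptional recipe actually defines an entire function of $s$, since the integrands are $O(z^{-2})$ at infinity uniformly for $s$ in compact sets. So excluding the discrete set of exceptional $s$ is unnecessary.
\end{itemize}
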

\begin{proof}
Let $v\in L^2(\mathbb{R},\mathbb{R})$ and $\epsilon>0$ be given.  We will find a potential $u_0$ with the desired properties so that $\|u_0-v\|_{L^2(\mathbb{R})}<\epsilon$.  

    First note that simple-pole rational functions $u_0$ of the form \eqref{eq:XS-u0-rational} satisfying the condition \eqref{eq:XS-u0-in-L1} and having no exceptional indices are dense in $L^2(\mathbb{R},\mathbb{R})$.  Therefore, there exists such a function $u_0^\epsilon$ for which $\|u^\epsilon_0-v\|_{L^2(\mathbb{R})}<\frac{1}{2}\epsilon$.  If $\Delta\neq 0$ for $u_0^\epsilon$, then we are done because $\|u_0^\epsilon-v\|_{L^2(\mathbb{R})}<\frac{1}{2}\epsilon<\epsilon$.
    
    Otherwise, we will find another simple-pole rational function $\widetilde{u}_0^\epsilon$ of the same type but for which $\Delta\neq 0$ and for which $\|\widetilde{u}_0^\epsilon-u_0^\epsilon\|_{L^2(\mathbb{R})}<\frac{1}{2}\epsilon$ so that $\|\widetilde{u}_0^\epsilon-v\|_{L^2(\mathbb{R})}\le \|\widetilde{u}_0^\epsilon-u_0^\epsilon\|_{L^2(\mathbb{R})}+\|u_0^\epsilon-v\|_{L^2(\mathbb{R})}<\frac{1}{2}\epsilon+\frac{1}{2}\epsilon=\epsilon$.  Fix $N=N(\epsilon)$ as the number of poles in the upper half-plane for $u_0^\epsilon$, and fix also the poles $p_n=p_n^\epsilon$, $n=1,\dots,N$.  We will construct $\widetilde{u}_0^\epsilon$ by perturbing the coefficients $c_n=c_n^\epsilon$ of $u_0^\epsilon$ consistent with the condition \eqref{eq:XS-u0-in-L1}.  Let $\mathbb{H}$ denote the complex hyperplane in $\mathbb{C}^{2N}$ consisting of vectors $\mathbf{v}$ with components satisfying $v_1+v_3+\cdots + v_{2N-1}=0$. The identification $c_n=v_{2n-1}+\ii v_{2n}$ and $c_n^*=v_{2n-1}-\ii v_{2n}$ shows that if $\mathbf{v}$ is real, then $\mathbf{v}\in\mathbb{H}$ is the same as the condition \eqref{eq:XS-u0-in-L1}.  Now allowing $\mathbf{v}\in\mathbb{H}$ to be complex,  we define an entire function $\widetilde{\Delta}:\mathbb{H}\to\mathbb{C}$ by applying the same procedure used to define $\Delta$ except that we define the contours $C_1,\dots,C_N$ and the quantities $E_1,\dots,E_N$ as if all indices $n=1,\dots,N$ were non-exceptional.  Clearly, if $\mathbf{v}$ is real, then $\widetilde{\Delta}=\Delta$ for fixed $p_1,\dots,p_N$, unless there is an exceptional index.  Since no indices are exceptional for $u_0^\epsilon$, which is an open condition on the coefficients $c_1,\dots,c_N$, there is a neighborhood $U$ of the real part $\mathbb{H}_\mathbb{R}$ of $\mathbb{H}$ containing $\mathbf{v}_0:=(\mathrm{Re}(c_1^\epsilon),\mathrm{Im}(c_1^\epsilon),\dots,\mathrm{Re}(c_N^\epsilon),\mathrm{Im}(c_N^\epsilon))^\top$ on which $\Delta=\widetilde{\Delta}$ is real-analytic.
    
    We claim that $\widetilde{\Delta}$ does not vanish identically on $\mathbb{H}$.  To see this, we choose a point of the form $\mathbf{v}=(0,v_2,0,\dots,0)^\top\in\mathbb{H}$ (or $c_2=\cdots=c_N=0$ while $c_1=\ii v_2$ and $c_1^*=-\ii v_2$), so that $\ee^{-\ii L(z)}$ is the branch of $\ee^{-\ii L(z)}=[(z-p_1)/(z-p_1^*)]^{v_2}$ that tends to $1$ as $z\to\infty$ backward along each contour $C_1,\dots,C_N$. Then, $E_1=E_2=\cdots=E_N=\ee^{2\pi\ii v_2}$ is the (common) corresponding limiting value along each contour in the forward direction, so all elements of the vector $\mathbf{d}$ are the same:  $d_m=1-E_m=1-\ee^{2\pi\ii v_2}$, $m=1,\dots,N$.  In the matrix elements of $\mathbf{J}$, the initial contour arcs $C_m^-$ may all be taken to be the same (denoted $C^-$), while a residue computation shows that
    \begin{equation}
        \int_{C_m^+}\frac{\ee^{-\ii L(z)}-\ee^{2\pi\ii v_2}}{z-p_n}\,\dd z = \int_{C_1^+}\frac{\ee^{-\ii L(z)}-\ee^{2\pi\ii v_2}}{z-p_n}\,\dd z + 2\pi\ii (\ee^{-\ii L(p_n)}-\ee^{2\pi\ii v_2})\delta_{1<n\le m},\quad m\ge 2.
    \end{equation}
    Therefore, setting 
    \begin{equation}
        f_n:=\int_{C^-}\frac{\ee^{-\ii L(z)}-1}{z-p_n}\,\dd z +\int_{C_1^+}\frac{\ee^{-\ii L(z)}-\ee^{2\pi\ii v_2}}{z-p_n}\,\dd z,\quad n=1,\dots,N,
    \end{equation}
    we see that the first row elements of $\mathbf{J}$ are
    $J_{1n}=f_n+2\pi\ii\ee^{2\pi\ii v_2}\delta_{n=1}$ while the subsequent rows have elements that can be written in the form $J_{mn}=J_{1n}+2\pi\ii\ee^{-\ii L(p_n)}\delta_{1<n\le m}$ for $m=2,\dots,N$.  Therefore, by row operations subtracting the first row from each subsequent row,
    \begin{multline}
        \det(\mathbf{J}\mathop{\longleftarrow}^n\mathbf{d}) \\= 
        \det\left(\begin{bmatrix}f_1+2\pi\ii\ee^{2\pi\ii v_2} & f_2 & f_3 & 
        \cdots & f_N\\        
        0 & 2\pi\ii\ee^{-\ii L(p_2)} & 0 & 
        \cdots & 0\\        
        0 & 2\pi\ii\ee^{-\ii L(p_2)} & 2\pi\ii\ee^{-\ii L(p_3)} & 
        \ddots & \vdots\\           
        \vdots & \vdots & \vdots &
        \ddots & 0\\        
        0 & 2\pi\ii\ee^{-\ii L(p_2)}& 2\pi\ii\ee^{-\ii L(p_3)} & 
        \cdots &2\pi\ii \ee^{-\ii L(p_N)}\end{bmatrix}\mathop{\longleftarrow}^n\begin{bmatrix}1-\ee^{2\pi\ii v_2} \\0\\0\\
        \vdots\\ 0\end{bmatrix}\right).
    \end{multline}
    This determinant obviously vanishes unless $n=1$, in which case it is easily evaluated.  We deduce that
    \begin{equation}
        \widetilde{\Delta}(0,v_2,0,\dots,0)=
    \det(\mathbf{J}\displaystyle\mathop{\longleftarrow}^1\mathbf{d}) = (1-\ee^{2\pi\ii v_2})(2\pi\ii)^{N-1}\prod_{n=2}^N\ee^{-\ii L(p_n)},
    \end{equation}
    which only vanishes if $v_2\in\mathbb{Z}$.  

    Returning to the neighborhood $U\subset\mathbb{H}_\mathbb{R}$, the real and imaginary parts of $\Delta=\widetilde{\Delta}$ are both real-analytic functions on $U$.  Therefore $m:=|\Delta|^2=\mathrm{Re}(\Delta)^2+\mathrm{Im}(\Delta)^2$ is a real-analytic function on $U$ that does not vanish identically, although $m=0$ for $\mathbf{v}=\mathbf{v}_0\in U$.  We introduce explicit coordinates $\mathbf{x}=(x_1,\dots,x_{2N-1})^\top$ on $U$ with $\mathbf{v}=\mathbf{v}_0$ corresponding to $\mathbf{x}=\mathbf{0}$ by 
    \begin{equation}
       \mathbf{v}=\mathbf{v}(\mathbf{x}):=\mathbf{v}_0+\left(-\sum_{j=1}^{N-1}x_{2j},x_1,x_2,\dots,x_{2N-1}\right)^\top 
    \end{equation}
    and we write $m=m(\mathbf{x})$.  Let $k\ge 1$ be the smallest integer such that $\partial_\mathbf{x}^\alpha m(\mathbf{0})=0$ for all multi-indices $\alpha$ with $|\alpha|<k$.  Note that $k<\infty$ because $m(\mathbf{x})$ does not vanish identically.  Let $P_k(\mathbf{x})$ denote the (nonvanishing) homogeneous polynomial consisting of the terms of degree $k$ in the Taylor expansion of $m(\mathbf{x})$ about $\mathbf{x}=\mathbf{0}$.  Then there is a unit vector $\mathbf{e}\in\mathbb{R}^{2N-1}$ such that $P_k(\mathbf{e})\neq 0$.  For $\rho>0$, we then have $m(\rho\mathbf{e})=P_k(\rho\mathbf{e})+O(\rho^{k+1})=\rho^kP_k(\mathbf{e}) + O(\rho^{k+1})$ as $\rho\downarrow 0$.  In particular, $m(\rho\mathbf{e})\neq 0$ for $\rho>0$ sufficiently small.  Since for fixed $N$ and $p_1,\dots,p_N$, the rational form \eqref{eq:XS-u0-rational} defines a continuous map from the coefficients $c_1,\dots,c_N$ into $L^2(\mathbb{R})$, choosing $\widetilde{\mathbf{v}}=\mathbf{v}(\mathbf{x})=\mathbf{v}(\rho\mathbf{e})$ for $\rho>0$ sufficiently small gives the coefficients $\widetilde{c}_n=\widetilde{v}_{2n-1}+\ii \widetilde{v}_{2n}$ of a rational function $\widetilde{u}_0^\epsilon$ with the desired properties, and the proof is complete.
\end{proof}

The main result of this paper is the following theorem, which (i) shows that for a wide class of rational initial data, the self-similar approximation is not quite correct in that the rate of decay has to be changed from $t^{-1/2}$ to $t^{-1/2}/\ln(t)$ and (ii) yields a completely explicit expression for the limiting profile $U(\xi)$.
\begin{theorem}
    Fix distinct points $p_1,\dots,p_N$ in the open upper half-plane and complex constants $c_1,\dots,c_N$ satisfying \eqref{eq:XS-u0-in-L1}.  Let $u(t,x)$ be the solution of the Benjamin-Ono equation \eqref{eq:XS-BO} with rational initial data $u_0(x)=u(0,x)$ of the form \eqref{eq:XS-u0-rational}.  
    If also $\Delta\neq 0$, then
    \begin{equation}
        \lim_{t\to+\infty}t^{1/2}\ln(t)u(t,2t^{1/2}\xi) = U(\xi)
    \end{equation}
    in the $L^\infty_\mathrm{loc}(\mathbb{R})$ sense with respect to $\xi$, where
    \begin{equation}
        U(\xi):=F(\xi)+F(\xi)^*=2\mathrm{Re}(F(\xi)),
    \label{eq:U-F-intro}
    \end{equation}
    and, with $L_+$ denoting a simple contour from $\infty\ee^{3\pi\ii/4}$ to $\infty\ee^{-\ii\pi/4}$ that passes above the origin,
    \begin{equation}
        F(\xi):=\frac{\ee^{-\ii\pi/4}}{\sqrt{\pi}}\int_{L_+}\ee^{-\ii(w-\xi)^2}\frac{\dd w}{w}.
    \label{eq:F-intro}
    \end{equation}
\label{thm:XS-main}
\end{theorem}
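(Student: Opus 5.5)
We will use the explicit solution formula for the Benjamin-Ono Cauchy problem (Gérard's formula) specialized to rational data. For $u_0$ of the form \eqref{eq:XS-u0-rational}, the solution is $u=\Pi u+(\Pi u)^*$, where $\Pi u(t,x)$ is the Hardy-space projection. It has a finite-dimensional representation
\[
\Pi u(t,x)=\frac{1}{2\pi\ii}\,I_+\big((G-x-2tL_{u_0})^{-1}\Pi u_0\big),
\]
with $I_+(f)=\int f$. The resolvent is computed by solving an $N\times N$ linear system. Its coefficients are contour integrals of $\ee^{-\ii L(z)}\ee^{\ii(xz-tz^2)}/(z-p_n)$, or equivalently of the Jost-type functions built from $\ee^{-\ii L}$, exactly the objects defining $\mathbf{J}$. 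The plan is to write $\Pi u(t,x)$ as a ratio of determinants by Cramer's rule, in which the $N$ contour integrals of type \eqref{eq:XS-J-def} acquire the oscillatory weight $\ee^{\ii(xz-tz^2)}$. We then set $x=2t^{1/2}\xi$ and rescale $z=w/t^{1/2}$, so that $xz-tz^2=2\xi w-w^2=-(w-\xi)^2+\xi^2$.

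The key asymptotic step is the following. After rescaling, the contours $C_m^\pm$ collapse toward the break point $z=0$, which is why the choice of $0$ as break point matters. On $C_m^-$ and $C_m^+$ the integrands are $(\ee^{-\ii L(z)}-1)\ee^{\ii(\cdot)}$ and $(\ee^{-\ii L(z)}-E_m)\ee^{\ii(\cdot)}$. Deforming to steepest-descent directions $\arg w\in\{3\pi/4,-\pi/4\}$, the dominant contributions are:
\begin{itemize}
\item the $t$-independent parts, which reproduce $\mathbf{J}$;
\item a jump of size $1-E_m=d_m$ across $w=0$, producing an integral of the form $d_m\int \ee^{-\ii(w-\xi)^2}\,\dd w/w$ over a contour near $0$. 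This is where $F(\xi)$ and the contour $L_+$ enter.
\end{itemize}
The $\ln t$ arises from the regularization of $\int \dd z/(z-p_n)$ near $z=0$ once it is cut off at scale $|z|\sim t^{-1/2}$. Schematically, each column of the rescaled system becomes $\mathbf{J}_{\cdot n}+\tfrac12\ln(t)\,\mathbf{d}\,(\text{const})+O(1)$. Multilinearity of the determinant then gives a denominator of size $(\ln t)\,\Delta+O(1)$ and a numerator that is $O(1)$ and proportional to $\Delta\,F(\xi)$ times $t^{-1/2}$. This is the same mechanism as in Theorem~\ref{thm:XS-generic}, where $\beta$ has $\varphi_1\ln\lambda$ in its denominator with $\varphi_1(0)=\Delta$. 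The leading term is therefore $t^{-1/2}F(\xi)/\ln t$, up to the normalization $\ee^{-\ii\pi/4}/\sqrt{\pi}$ coming from the Fresnel integral. Adding the conjugate gives $U=2\mathrm{Re}\,F$.

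The steps, in order, are:
\begin{enumerate}
\item Derive the finite determinantal formula for $\Pi u$ for rational data.
\item Deform the contours $C_m$ through $0$ into steepest-descent form, keeping the exceptional arcs ending at $p_m$ in $\mathbb{C}_+$ where the exponential decays.
\item Expand each column as $a_n\ln t+b_n+O(t^{-1/2}\ln t)$, uniformly for $\xi$ in compact sets.
\item Expand the determinants, using that the terms quadratic in $\mathbf{d}$ vanish since they involve repeated columns.
\item Check that the numerator's leading coefficient equals $\Delta$ times the local integral defining $F$.
\end{enumerate}

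The main obstacle will be steps 3–4. We need to track the exact constant multiplying $\ln t$ and show that it produces precisely $\Delta$ through the sum \eqref{eq:XS-DeltaN}, rather than some other combination. We also need uniform error control near $z=0$, where $\ee^{-\ii L(z)}-1=O(z)$ only after choosing the correct branch on each side. For this we will first treat the model case $N=1$ with $c_1=\ii$, i.e.\ $u_0=-2/(1+x^2)$, where $\Delta=1$, to fix the constants, and then use the column-replacement structure to generalize.
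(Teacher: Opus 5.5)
Your skeleton (a determinant formula, a rank-one $\ln t$ structure in the columns, and column replacement producing $\Delta$) is the right one. However, the mechanism you describe for the leading term is wrong in two places, and step~5, which you leave as a check, is where the real work lies.

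First, the phase is wrong. In the rational-data formula $\Pi u=\det\mathbf{A}/\det\mathbf{B}$ the weight is $\ee^{-\ii(z-x)^2/(4t)}\ee^{-\ii L(z)}$ in the physical variable $z$. So $x=2t^{1/2}\xi$ calls for $z=2t^{1/2}w$, not $z=w/t^{1/2}$. The rescaled poles $p_n/(2t^{1/2})$ then approach $w=0$. The $\ln t$ comes from $\int\dd w/(w-p_n/(2t^{1/2}))$ over $t^{-1/2}\lesssim|w|\lesssim1$, i.e.\ from the $1/z$ tail of $\ee^{-\ii L(z)}/(z-p_n)$ on $1\lesssim|z|\lesssim t^{1/2}$. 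The arc $C_{j-1}^-$ contributes $-\tfrac12\ee^{-\ii\xi^2}\ln t$ and $C_{j-1}^+$ contributes $+\tfrac12E_{j-1}\ee^{-\ii\xi^2}\ln t$. With your weight $\ee^{\ii(xz-tz^2)}$, the factor $1/(z-p_n)$ is smooth at the scale $|z|\sim t^{-1/2}$, so no logarithm arises there.

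Second, and more seriously, $F$ does not come from the jump terms $d_m\int\ee^{-\ii(w-\xi)^2}\,\dd w/w$. Those are the terms $(1-E_{j-1})\int_{W^-}\ee^{-\ii(w-\xi)^2}\,\dd w/w$. They form the same multiple of $\boldsymbol{\eta}=(E_0-1,\dots,E_N-1)^\top$ in every column, so the multilinearity argument of your step~4 annihilates them. In the rows built on $C_1,\dots,C_N$ (which return to $\infty\ee^{3\pi\ii/4}$ or end at $p_m$), what remains is $\ee^{-\ii\xi^2}$ times constants: $K_{jk}$ plus residues $2\pi\ii E_{j-1}\delta_{j\ge k}$. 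With only those $N$ rows, $F$ never appears.

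$F$ enters solely through the extra row on $C_0$, which runs from $\infty\ee^{3\pi\ii/4}$ to $\infty\ee^{-\ii\pi/4}$ (presumably what your $I_+$ would supply). There $E_0\int_{W^+_{1k}}+\int_{W^-}=(1-E_0)\int_{W^-}+E_0\int_{L_-}$ and $\int_{L_-}=\int_{L_+}+2\pi\ii\ee^{-\ii\xi^2}$. This gives $E_0\ee^{\ii\pi/4}\sqrt{\pi}F(\xi)$, with coefficient $E_0$ rather than $d_m$.

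To extract exactly $\Delta F$ you then need three ingredients absent from your plan:
(i) The first column satisfies $\int_{C_{j-1}}u_0(z)\ee^{-\ii(z-x)^2/(4t)-\ii L(z)}\,\dd z=\ii\ee^{-\ii\xi^2}(E_{j-1}-1)+O(t^{-1/2}\ln t)$. This follows from $u_0=L'$, and it is what removes every $\ln t$ from the numerator.
(ii) You need the identity $K_{1k}=K_{N+1,k}$ together with $E_0=E_N$ (or $K_{1k}=-2\pi\ii$ and $E_0=1$ when all indices are exceptional). This lets you subtract the $C_N$ row and reduce the $C_0$ row to $(0,f,\dots,f)$. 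Expanding along it gives $\sum_n\det$ of the constant matrix $\mathbf{T}$ with its $n$th column replaced by $\mathbf{d}$. That sum equals $\Delta$ because $\mathbf{T}$ and $\mathbf{J}$ differ columnwise by multiples of $\mathbf{d}$.
(iii) In the denominator, the Gaussian entry $2\sqrt{\pi t}\,\ee^{-\ii\pi/4}E_0$ in position $(1,1)$ is not proportional to $\boldsymbol{\eta}$, which is why $\ln t$ survives there. It produces $t^{1/2}\ln t$ times the same nonzero prefactor $-E_0\ee^{-\ii\pi/4}\sqrt{\pi}\,\ee^{-\ii N\xi^2}\Delta$ that multiplies $F(\xi)$ in the numerator.
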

Although it decays with a well-defined universal profile $U(\xi)$ and $\xi=x/(2t^{1/2})$  is indeed the similarity variable for the Benjamin-Ono equation, the leading term of $u(t,x)$ is evidently \emph{not} a self-similar solution of the Benjamin-Ono equation, because it comes with a faster rate of decay, $t^{-1/2}/\ln(t)$ rather than the self-similar rate of $t^{-1/2}$.

The proof of Theorem~\ref{thm:XS-main} is based on an explicit formula for the solution of the Cauchy problem found by one of the authors in \cite{Gerard22}, combined with an implied expression valid for rational initial data that was obtained in \cite{BlackstoneGGM24a}.

\subsection{Properties of $F(\xi)$ and $U(\xi)$}
It is clear from the formula \eqref{eq:F-intro} that $\xi\mapsto F(\xi)$ is an entire function.  Its asymptotic behavior as $\xi\to\infty$ in $\mathbb{C}$ can be analyzed by the method of steepest descent.  
One can therefore see that 
\begin{equation}
F(\xi)=\frac{1}{\ii\xi} + O(\xi^{-3}),\quad|\xi|\to\infty,\quad 0\le\arg(\xi)<\pi,
\label{eq:XS-f-right}
\end{equation}
and accounting for a residue at the origin,
\begin{equation}
F(\xi)=2\sqrt{\pi}\ee^{-3\pi\ii/4}\ee^{-\ii\xi^2} +\frac{1}{\ii\xi} + O(\xi^{-3}),\quad\xi\to -\infty.
\label{eq:XS-f-left}
\end{equation}
In particular, $F$ is bounded in the closed upper half-plane, and thus is an element of the Hardy-type space $L_+^\infty(\mathbb{R})$.
Also, observe that $F$ satisfies a first-order linear differential equation:
\begin{equation}
\begin{split}
    F'(\xi)& = -2\ii\frac{\ee^{-\ii\pi/4}}{\sqrt{\pi}}\int_{L_+}\ee^{-\ii (w-\xi)^2}\frac{\xi-w}{w}\,\dd w \\ &= -2\ii\xi F(\xi)+\frac{2\ee^{\ii\pi/4}}{\sqrt{\pi}}\int_{L_+}\ee^{-\ii (w-\xi)^2}\,\dd w \\ &= -2\ii\xi F(\xi) + 2.
\end{split}
\label{eq:fprime}
\end{equation}
The unique solution vanishing in the limit $\xi\to+\infty$ (according to \eqref{eq:XS-f-right}) is
given explicitly in terms of a convergent improper Fresnel-type integral:
\begin{equation}
    F(\xi)=-2\ee^{-\ii\xi^2}\int_\xi^{+\infty}\ee^{\ii w^2}\,\dd w \xRightarrow[\eqref{eq:U-F-intro}]{} U(\xi)=-4\int_\xi^{+\infty}\cos(w^2-\xi^2)\,\dd w.
\label{eq:F-Fresnel}
\end{equation}

Using \eqref{eq:U-F-intro}, the asymptotics
\eqref{eq:XS-f-right}--\eqref{eq:XS-f-left} imply that 
\begin{equation}
U(\xi)=\begin{cases}O(\xi^{-3}),& \xi\to+\infty   \\
4\sqrt{\pi}\cos(\xi^2+3\pi/4)+O(\xi^{-3}),& \xi\to-\infty,
\end{cases}
\end{equation}
because the terms proportional to $\xi^{-1}$ do not contribute to $U(\xi)$ in either limit. 
See Figure~\ref{fig:U-of-xi} for a plot of $U(\xi)$.
\begin{figure}[h]
\begin{center}
    \includegraphics[width=0.4\linewidth]{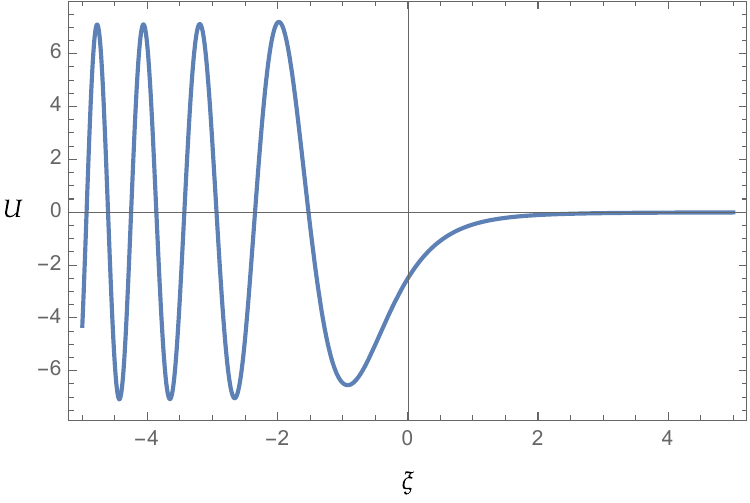}
\end{center}
\caption{The universal profile function $U(\xi)$.}
\label{fig:U-of-xi}
\end{figure}

For a function $f\in H^1(\mathbb{R})$, the nonlocal operator $|D|$ in the Benjamin-Ono equation is defined by 
\begin{equation}
    |D|f(\xi):=-\ii\partial_\xi (\Pi_+ f(\xi)-\Pi_-f(\xi))=-\ii (\Pi_+ f'(\xi)-\Pi_-f'(\xi)),
\end{equation}
where $\Pi_\pm$ are the complementary and orthogonal Cauchy-Szeg\H{o} projectors from $L^2(\mathbb{R})$ onto the Hardy spaces $L^2_\pm(\mathbb{R})$.  We sometimes use the shorthand $\Pi:=\Pi_+$.  The function $f$ can itself be written as a sum of the projections:  $f(\xi)=\Pi_+f(\xi)+\Pi_-f(\xi)$.  Now suppose that $g\in L^\infty(\mathbb{R})\cap C^1(\mathbb{R})$ and that it is known that there exists a function $g_+\in L^\infty_+(\mathbb{R})\cap C^1(\mathbb{R})$ such that $g(\xi)=g_+(\xi)+g_+(\xi)^*$. We may define $|D|g$ as follows.  First, we show that $g_+$ is uniquely determined from $g$ up to a constant.  Indeed, if $g_+(\xi)+g_+(\xi)^*=\tilde{g}_+(\xi)+\tilde{g}_+(\xi)^*$ with $g_+,\tilde{g}_+\in L^\infty_+(\mathbb{R})\cap C^1(\mathbb{R})$, then $\tilde{g}_+-g_+\in L^\infty_+(\mathbb{R})\cap C^1(\mathbb{R})$ is continuous and purely imaginary, and hence by Schwarz reflection extends to an entire function that is bounded.  By Liouville's theorem, $\tilde{g}_+-g_+$ is a purely imaginary constant function.  Let $g_+$ denote any element of the corresponding equivalence class.
Next, we introduce a cutoff function by
\begin{equation}
\chi_\epsilon(\xi):=\frac{1}{1+\ii\epsilon\xi},\quad\epsilon>0.
\end{equation}
Note that for each $\epsilon>0$, $\chi_\epsilon\in L^2_+(\mathbb{R})$, and that $\chi_\epsilon\to 1$ in $L^\infty_\mathrm{loc}(\mathbb{R})$ as $\epsilon\downarrow 0$.  Therefore, for $\epsilon>0$, we have $\chi_\epsilon g_+\in L^2_+(\mathbb{R})\cap L^\infty_+(\mathbb{R})$, and $\chi_\epsilon g_+\to g_+$ in $L^\infty_\mathrm{loc}(\mathbb{R})$.  
Then, we consider, for each $\epsilon>0$, $g_\epsilon$ defined by
\begin{equation}
    g_\epsilon:=\chi_\epsilon g_+ + \chi_\epsilon^*g_+^*,
\end{equation}
for which $\Pi_+g_\epsilon = \chi_\epsilon g_+$ and $\Pi_-g_\epsilon = \chi_\epsilon^*g_+^*$.
Then, for each $\epsilon>0$ the action $|D|g_\epsilon$ makes sense and is given by
\begin{equation}
\begin{aligned}
    |D|g_\epsilon(\xi) &= -\ii\partial_\xi(\chi_\epsilon(\xi) g_+(\xi) -\chi_\epsilon(\xi)^*g_+(\xi)^*)\\
    &=-\epsilon\chi_\epsilon(\xi)^2g_+(\xi)-\epsilon\chi_\epsilon(\xi)^{*2}g_+(\xi)^*-\ii(\chi_\epsilon(\xi)g_+'(\xi)-\chi_\epsilon(\xi)^*g_+'(\xi)^*).
\end{aligned}
\end{equation}
Under the conditions in force the right-hand side has a limit in $L^\infty_\mathrm{loc}(\mathbb{R})$ as $\epsilon\downarrow 0$, and we define
\begin{equation}
    |D|g(\xi):=\lim_{\epsilon\downarrow 0}|D|g_\epsilon(\xi) = -\ii (g_+'(\xi)-g_+'(\xi)^*).
\label{eq:AbsD}
\end{equation}
Note that the definition is independent of the choice of equivalence class representative for $g_+$.

\begin{theorem}
    The function $U(\xi):=F(\xi)+F(\xi)^*$ with $F\in L^\infty_+(\mathbb{R})\cap C^1(\mathbb{R})$ defined by \eqref{eq:F-intro} satisfies
    \begin{equation}
        -\xi U(\xi)=\frac{1}{2}|D|U(\xi).
        \label{eq:linearized}
    \end{equation}
\label{thm:XS-linearized}
\end{theorem}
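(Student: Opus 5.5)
The plan is to apply the definition \eqref{eq:AbsD} directly with $g=U$ and $g_+=F$, and then use the first-order differential equation \eqref{eq:fprime} satisfied by $F$. The hypotheses needed for \eqref{eq:AbsD} are already available. $F$ is entire, so it is $C^1(\mathbb{R})$. By \eqref{eq:XS-f-right}, $F$ is bounded in the closed upper half-plane, and since it is entire it is analytic in $\mathbb{C}_+$. Hence $F\in L^\infty_+(\mathbb{R})\cap C^1(\mathbb{R})$, and $U=F+F^*$ has exactly the form required. We should also check that the limit in $L^\infty_{\mathrm{loc}}$ defining $|D|U$ exists. This only needs $F$ and $F'$ to be locally bounded, which is clear. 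Therefore
\begin{equation*}
|D|U(\xi)=-\ii\bigl(F'(\xi)-F'(\xi)^*\bigr)=2\,\mathrm{Im}\,F'(\xi),
\end{equation*}
and by the earlier remark this does not depend on which representative of the equivalence class of $F$ we use.

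Next we substitute \eqref{eq:fprime}, namely $F'(\xi)=-2\ii\xi F(\xi)+2$, together with its complex conjugate for real $\xi$, $F'(\xi)^*=2\ii\xi F(\xi)^*+2$. The constants $2$ cancel in the difference, leaving
\begin{equation*}
F'(\xi)-F'(\xi)^*=-2\ii\xi\bigl(F(\xi)+F(\xi)^*\bigr)=-2\ii\xi U(\xi).
\end{equation*}
Multiplying by $-\ii$ gives $|D|U(\xi)=-2\xi U(\xi)$, which is \eqref{eq:linearized}.

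The only step that needs real care is justifying \eqref{eq:fprime} rigorously. This is the main obstacle, although it is a mild one. We differentiate under the integral sign in \eqref{eq:F-intro}, using that $\ee^{-\ii(w-\xi)^2}$ decays in the sectors containing the ends of $L_+$, locally uniformly in $\xi$. We then rewrite $-2\ii(\xi-w)/w=-2\ii\xi/w+2\ii$ and evaluate the resulting Gaussian integral $\int_{L_+}\ee^{-\ii(w-\xi)^2}\,\dd w=\sqrt{\pi}\,\ee^{-\ii\pi/4}$ by rotating the contour. The constant then matches $2\ee^{\ii\pi/4}\ee^{-\ii\pi/4}=2$. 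Alternatively, we could start from the Fresnel representation \eqref{eq:F-Fresnel} and differentiate it directly, which gives $F'=-2\ii\xi F+2$ immediately. We also need its boundedness on $\mathbb{C}_+$, which follows from the steepest-descent asymptotics already quoted in \eqref{eq:XS-f-right}.

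As a remark, the result says that $U$ solves the linearization about $U=0$ of the integrated profile equation \eqref{eq:integrated-BO-profile}. This is consistent with the faster $t^{-1/2}/\ln(t)$ decay.
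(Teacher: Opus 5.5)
Your proposal is correct and is essentially the paper's own proof. You apply \eqref{eq:AbsD} with $g=U$, $g_+=F$, then substitute \eqref{eq:fprime} and its complex conjugate, so the constants $2$ cancel and $-\ii(F'-F'^*)=-2\xi U$; your added verification of \eqref{eq:fprime} matches the derivation the paper already gives in the introduction.
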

\begin{proof}
    Applying \eqref{eq:AbsD} with $g=U$ and $g_+=F$ gives
    \begin{equation}
        \frac{1}{2}|D|U(\xi)=-\ii\left(\frac{1}{2}F'(\xi)-\frac{1}{2}F'(\xi)^*\right).
    \end{equation}
    Then, using the differential equation \eqref{eq:fprime},
    \begin{equation}
        \frac{1}{2}|D|U(\xi)=-\ii\left([-\ii\xi F(\xi)+1]-[\ii\xi F'(\xi)^*+1]\right) = -\xi(F(\xi)+F(\xi)^*) = -\xi U(\xi),
    \end{equation}
    so the proof is finished.
\end{proof}
Theorem~\ref{thm:XS-linearized} shows that the profile $U(\xi)$ is a solution of \eqref{eq:linearized} rather than the self-similar profile equation \eqref{eq:integrated-BO-profile}.  Noting that the former can be obtained from the latter by omitting the nonlinear term $2U(\xi)^2$, we have the heuristic interpretation that the linearization arises because, according to Theorem~\ref{thm:XS-main}, $t^{1/2}u(t,2t^{1/2}\xi)$ is an increasingly small function of $\xi$ (decaying in $L^\infty_\mathrm{loc}$ as $1/\ln(t)$), so $U(\xi)$ should be an infinitesimal solution of the nonlinear profile equation \eqref{eq:integrated-BO-profile}.

Without the hypotheses of Theorem~\ref{thm:XS-main}, the asymptotic behavior of $u(t,2t^{1/2}\xi)$ can be quite different.  Indeed, if $u(t,x)$ is a $n$-soliton solution, then $\Delta=0$ and since the soliton with speed $v>0$ decays as $O((x-vt)^{-2})$ we find that $u(t,2t^{1/2}\xi)=O(t^{-2})$ as $t\to+\infty$ uniformly for bounded $\xi$.  This is a substantially faster rate of decay.  On the other hand, there are examples showing that if the condition \eqref{eq:XS-u0-in-L1} is violated, then there can be infinitely many Lax eigenvalues $\lambda_n<0$ corresponding to infinitely many solitons with speeds $v_n>0$ accumulating at zero (see, for instance, \cite{GassotG26}).  One therefore might expect that the solution should be larger than predicted by Theorem~\ref{thm:XS-main} and indeed one can prove that in some examples $t^{1/2}u(t,2t^{1/2}\xi)$ converges as $t\to+\infty$ to a non-universal nonzero limiting profile \cite{GassotGM26+}.  The decay rate of such a solution is slower by a logarithmic factor than that predicted by Theorem~\ref{thm:XS-main}.

That said, a reasonable question is whether an analogue of Theorem~\ref{thm:XS-main} holds if only the assumption of rationality is dropped.  A generalization of the condition \eqref{eq:XS-u0-in-L1} would be an assumption that $u_0(\diamond)$ and $\diamond u_0(\diamond)$ lie in $L^2(\mathbb{R})$, and a natural analogue of the condition $\Delta\neq 0$ would simply be the assumption that $u_0$ is generic.  While a proof might still be based on the explicit formula of \cite{Gerard22}, it would necessarily be quite different without the rational assumption.  

\subsection{Further remarks}
The universal profile function $U(\xi)$ may be viewed as an analogue for the BO equation of the Miura transform $U_\mathrm{HM}(\xi):=V_\mathrm{HM}(\xi)^2+V_\mathrm{HM}'(\xi)$ of the Hastings-McLeod solution $V_\mathrm{HM}(\xi)$ of the Painlev\'e-II equation for the KdV equation.  Both are specific functions that yield the asymptotic behavior for large $t$ and $x/t$ sufficiently small for a wide class of initial data.  However, unlike $U_\mathrm{HM}(\xi)$, $U(\xi)$ is easily expressed in terms of elementary functions.  This observation continues a theme that was also observed recently in connection with universality in the small-dispersion limit \cite{BlackstoneMM26}.

In the large-time behavior of the KdV equation,  Hastings-McLeod universality for $x/t=O(t^{-1/3})$ is always accompanied by a collisionless shock region to the left of the central self-similar region.  This was originally suggested \cite{AblowitzS77} by a mismatch of the self-similar asymptotic formula with the leading behavior in the dispersive decay region $x/t<-\delta$.  Similar reasoning suggests that there is no analogue of the collisionless shock region for the BO equation.
Indeed, the large-$t$ formula for $u$ given in Theorem~\ref{thm:XS-main} agrees asymptotically with the large-$t$ formula given in \cite{BlackstoneGGM24a} in an overlap domain corresponding to large negative $\xi$ in \eqref{eq:XS-u-convergence} and small negative $y$ in \cite[Equation (1.19)]{BlackstoneGGM24a}.  Of course this observation does not amount to a proof because the two regions of validity do not obviously overlap.

\subsection{Outline of the paper}
In Section~\ref{sec:XS-minus-a-soliton} we first prove Theorem~\ref{thm:XS-main} in a simple special case, namely the generic initial datum $u_0(x)=-2/(1+x^2)$, for which the long-time asymptotic behavior when $|x/t|\ge\delta>0$ was proved in \cite{BlackstoneGGM24a}.  Here we follow the approach of that paper (rather than the more general approach omitting the rational assumption altogether given in \cite{GassotGM26}) but focus instead on the regime $|x/t|\lesssim t^{-1/2}$, i.e., where $|x|=O(\sqrt{t})$.  Then in Section~\ref{sec:XS-general} we prove Theorem~\ref{thm:XS-main} in all generality.  Finally, the proof of Theorem~\ref{thm:XS-generic} can be found in the Appendix.

\subsection*{Acknowledgements} 
L. Gassot was supported by the France 2030 framework program, the Centre Henri Lebesgue ANR-11-LABX-0020-01, and  the French Agence Nationale de la Recherche under the ANR project HEAD--ANR-24-CE40-3260.
P. Gérard was partially supported by the French Agence Nationale de la Recherche under the ANR project ISAAC--ANR-23--CE40-0015-01.
 P. D. Miller was partially supported by the National Science Foundation under grant DMS-2508694.

\section{Proof of Theorem~\ref{thm:XS-main}:  elementary example}
\label{sec:XS-minus-a-soliton}
Let us first give a quick proof of Theorem~\ref{thm:XS-main} in a special case.
We analyze the solution $u(t,x)$ of \eqref{eq:XS-BO} with initial data $u_0(x)=-2/(1+x^2)$ (the negative of a soliton profile), which following \cite[Section 6]{BlackstoneGGM24a} can be written in the form $u(t,x)=\Pi u(t,x)+\Pi u(t,x)^*$ with
\begin{equation}
    \Pi u(t,x)=\frac{N(t,x)}{D(t,x)},
    \label{eq:XS-Piu}
\end{equation}
where
\begin{equation}
    N(t,x)=-2\begin{vmatrix}J_0(t,x) & I_0(t,x)\\J_1(t,x) & I_1(t,x)\end{vmatrix},\quad D(t,x)=\begin{vmatrix}K_0(t,x) & I_0(t,x)\\K_1(t,x) & I_1(t,x)\end{vmatrix}
\label{eq:XS-NandD}
\end{equation}
and for $p=0,1$,
\begin{equation}
\begin{gathered}
    I_p(t,x):=\int_{C_p}\ee^{-\ii (z-x)^2/(4t)}\frac{\dd z}{z+\ii},\quad
    J_p(t,x):=\int_{C_p}\ee^{-\ii (z-x)^2/(4t)}\frac{\dd z}{(z+\ii)^2},\\
    K_p(t,x):=\int_{C_p}\ee^{-\ii (z-x)^2/(4t)}\,\dd z.
\end{gathered}
\end{equation}
Here $C_0$ and $C_1$ are contours that can be parametrized by $z=\ii + \ee^{-\ii\pi/4}\zeta$ with $-\infty<\zeta<\infty$ and $-\infty<\zeta<0$ respectively.

We now study $u(t,x)$ with $x=2t^{1/2}\xi$ for bounded $\xi$ in the limit $t\to+\infty$ by studying the integrals $I_p$, $J_p$, and $K_p$ for $p=0,1$.  As pointed out in \cite[Section 6]{BlackstoneGGM24a}, $K_0$ is a Gaussian integral independent of $x$ given by 
\begin{equation}
    K_0(t,x)=\ee^{-\ii\pi/4}2\sqrt{\pi t},\quad t>0.
\end{equation}
To find the asymptotic behavior of the other five integrals when $x=2t^{1/2}\xi$ for $\xi=O(1)$, we make 
the corresponding scaling $z=2t^{1/2}w$, 
and hence obtain
\begin{equation}
\begin{split}
    I_p(t,2t^{1/2}\xi)&=\int_{C_p/(2t^{1/2})}\ee^{-\ii (w-\xi)^2}\frac{\dd w}{w+\ii/(2t^{1/2})},\\ J_p(t,2t^{1/2}\xi)&=\frac{1}{2t^{1/2}}\int_{C_p/(2t^{1/2})}\ee^{-\ii (w-\xi)^2}\frac{\dd w}{(w+\ii/(2t^{1/2}))^2},\\
K_p(t,2t^{1/2}\xi)&=2t^{1/2}\int_{C_p/(2t^{1/2})}\ee^{-\ii (w-\xi)^2}\,\dd w.
\end{split}
\end{equation}
In the case of $I_0(t,2t^{1/2}\xi)$ and $J_0(t,2t^{1/2}\xi)$, we can use Cauchy's theorem to replace the integration contour with one that is independent of $t>0$; any simple contour $L_+$ from $\infty\ee^{3\pi\ii/4}$ to $\infty\ee^{-\ii\pi/4}$ that passes above the origin will do.  Hence one sees that
\begin{equation}
    I_0(t,2t^{1/2}\xi)=\int_{L_+}\ee^{-\ii (w-\xi)^2}{\frac{\dd w}{w}} + O(t^{-1/2}),\quad J_0(t,2t^{1/2}\xi)=\frac{1}{2t^{1/2}}\int_{L_+}\ee^{-\ii (w-\xi)^2}\frac{\dd w}{w^2} + O(t^{-1}),
\label{eq:I0J0}
\end{equation}
in the limit $t\to+\infty$, uniformly for $\xi=O(1)$.  For the integrals over $C_1$, we see that the rescaled contour $C_1/(2t^{1/2})$ terminates at $w=\ii/(2t^{1/2})$.  Thus $K_1(t,2t^{1/2}\xi)$ can be expanded as
\begin{equation}
K_1(t,2t^{1/2}\xi)=2t^{1/2}\int_{\ee^{3\pi\ii/4}\infty}^0\ee^{-\ii (w-\xi)^2}\,\dd w+O(1),
\end{equation}
as $t\to+\infty$ with $\xi=O(1)$.  For $I_1(t,2t^{1/2}\xi)$ and $J_1(t,2t^{1/2}\xi)$ there is a singularity near the integration endpoint so a local analysis must be performed.  Let $L_1$ be the diagonal half-line terminating at $w=\ii$ with $\arg(w-\ii)=3\pi/4$.  Then by Cauchy's theorem we can replace $C_1/(2t^{1/2})$ with the contour $L_1\cup [\ii\to \ii/(2t^{1/2})]$ and therefore
\begin{equation}
I_1(t,2t^{1/2}\xi)=\int_{L_1}\frac{\ee^{-\ii (w-\xi)^2}\,\dd w}{w+\ii/(2t^{1/2})} + \int_\ii^{\ii/(2t^{1/2})}\frac{\ee^{-\ii (w-\xi)^2}\,\dd w}{w+\ii/(2t^{1/2})}.
\end{equation}
The first term has a limit as $t\to+\infty$, so is $O(1)$.  The second term can be written as
\begin{multline}
    \int_\ii^{\ii/(2t^{1/2})}\frac{\ee^{-\ii (w-\xi)^2}\,\dd w}{w+\ii/(2t^{1/2})} = \int_\ii^{\ii/(2t^{1/2})}\frac{\ee^{-\ii (w-\xi)^2}-\ee^{-\ii (-\ii/(2t^{1/2})-\xi)^2}}{w+\ii/(2t^{1/2})}\,\dd w \\+ \ee^{-\ii (-\ii/(2t^{1/2})-\xi)^2}\int_\ii^{\ii/(2t^{1/2})}\frac{\dd w}{w+\ii/(2t^{1/2})}
\label{eq:second-term}
\end{multline}
and again the first term on the right-hand side is $O(1)$ since the singularity at $w=-\ii/(2t^{1/2})$ has been cancelled.  By explicit calculation, the integral factor in the second term on the right-hand side is $-\frac{1}{2}\ln(t)+O(t^{-1/2})$, so expanding the exponential factor we obtain
\begin{equation}
    I_1(t,2t^{1/2}\xi)=-\frac{1}{2}\ee^{-\ii\xi^2}\ln(t) + O(1)
\end{equation}
as $t\to+\infty$ for $\xi=O(1)$. For $J_1(t,2t^{1/2}\xi)$, we similarly split up
\begin{equation}
2t^{1/2}J_1(t,2t^{1/2}\xi)=\int_{L_1}\frac{\ee^{-\ii (w-\xi)^2}\,\dd w}{(w+\ii/(2t^{1/2}))^2} + \int_\ii^{\ii/(2t^{1/2})}\frac{\ee^{-\ii (w-\xi)^2}\,\dd w}{(w+\ii/(2t^{1/2}))^2},
\end{equation}
and again the first term on the right-hand side is $O(1)$.  By the same grouping of the exponential terms in the numerator as in \eqref{eq:second-term}, we can obtain
\begin{equation}
\begin{aligned}
    \int_\ii^{\ii/(2t^{1/2})}\frac{\ee^{-\ii (w-\xi)^2}\,\dd w}{(w+\ii/(2t^{1/2}))^2}&=\ee^{-\ii\xi^2}\int_\ii^{\ii/(2t^{1/2})}\frac{\dd w}{(w+\ii/(2t^{1/2}))^2} + O(\ln(t))\\
    &=t^{1/2}\ii\ee^{-\ii\xi^2} + O(\ln(t))
\end{aligned}
\end{equation}
so that
\begin{equation}
    J_1(t,2t^{1/2}\xi)=\frac{1}{2}\ii\ee^{-\ii\xi^2} + O(t^{-1/2}\ln(t))
\end{equation}
as $t\to+\infty$ with $\xi=O(1)$.  

Returning now to \eqref{eq:XS-NandD}, 
\begin{equation}
\begin{aligned}
N(t,2t^{1/2}\xi)&=-2\left[J_0(t,2t^{1/2}\xi)I_1(t,2t^{1/2}\xi)-J_1(t,2t^{1/2}\xi)I_0(t,2t^{1/2}\xi)\right]\\&=\ii\ee^{-\ii\xi^2}\int_{L_+}\ee^{-\ii (w-\xi)^2}\frac{\dd w}{w}+O(t^{-1/2}\ln(t)),
\end{aligned}
\end{equation}
and
\begin{equation}
\begin{aligned}
D(t,2t^{1/2}\xi)&=K_0(t,2t^{1/2}\xi)I_1(t,2t^{1/2}\xi)-K_1(t,2t^{1/2}\xi)I_0(t,2t^{1/2}\xi)\\&=-\ee^{-\ii\pi/4}\sqrt{\pi t}\ee^{-\ii\xi^2}\ln(t) + O(t^{1/2}),
\end{aligned}
\end{equation}
so from \eqref{eq:XS-Piu},
\begin{equation}
    \Pi u(t,2t^{1/2}\xi)=\frac{F(\xi)}{t^{1/2}\ln(t)} + O\left(\frac{1}{t^{1/2}\ln(t)^2}\right),
\end{equation}
as $t\to+\infty$ with $\xi=O(1)$, where $F(\xi)$ is the function defined in \eqref{eq:F-intro}.
Going back to $u(t,2t^{1/2}\xi)$ gives
\begin{equation}
    u(t,2t^{1/2}\xi)=\Pi u(t,2t^{1/2}\xi) + \Pi u(t,2t^{1/2}\xi)^* = \frac{U(\xi)}{t^{1/2}\ln(t)} + O\left(\frac{1}{t^{1/2}\ln(t)^2}\right)
\label{eq:XS-u-convergence}
\end{equation}
as $t\to+\infty$ with $\xi=O(1)$, where $U(\xi)$ is defined in terms of $F(\xi)$ in \eqref{eq:U-F-intro}.  Since $u_0(x)=-2/(1+x^2)$ is of the form \eqref{eq:XS-u0-rational} for $N=1$ with data $p_1=\ii$ and $c_1=\ii$ satisfying \eqref{eq:XS-u0-in-L1}, and as pointed out in the introduction it follows that $\Delta=1\neq 0$, this proves Theorem~\ref{thm:XS-main} for this initial datum.
Note that the $O(1/\ln(t))$ relative error of the leading-term approximation in \eqref{eq:XS-u-convergence} decays to zero very slowly.
The convergence established in \eqref{eq:XS-u-convergence} is illustrated in Figure~\ref{fig:XS-u-convergence}.
\begin{figure}[h]
\begin{center}
    \includegraphics[width=0.6\linewidth]{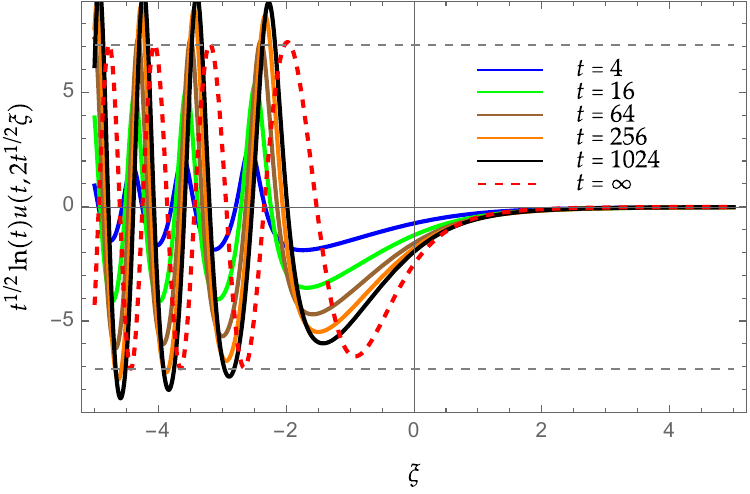}
\end{center}
\caption{A comparison of $t^{1/2}\ln(t)u(t,2t^{1/2}\xi)$ computed via numerical evaluation of the contour integrals in the determinants $N$ and $D$ for various values of $t>0$ with the limiting profile $U(\xi)$ (dashed red curve).}
\label{fig:XS-u-convergence}
\end{figure}

\section{Proof of Theorem~\ref{thm:XS-main}: general case}
\label{sec:XS-general}
\subsection{Solution formula for rational initial data}
We now consider $u_0(x)$ of the general form \eqref{eq:XS-u0-rational} subject to \eqref{eq:XS-u0-in-L1}.  For our present purposes we assume that the contours $C_1,\dots,C_N$ are unbounded only in the direction $\arg(z)=3\pi/4$.  We also introduce an additional unbounded oriented Jordan contour $C_0$ from $z=\infty\ee^{3\pi\ii/4}$ to $z=\infty\ee^{-\ii\pi/4}$ with the poles $p_1,\dots,p_N$ on its left and the poles $p_1^*,\dots,p_N^*$ on its right.  See Figure~\ref{fig:C0Cm}.
\begin{figure}[h]
\begin{center}
    \begin{tikzpicture}[>=Stealth, scale=0.75]
  \draw[gray, thin, ->] (-2.5,0) -- (2.5,0);
  \draw[gray, thin, ->] (0,-2.5) -- (0,2.5);
  \draw[
  red, thick, decoration={markings, mark=at position 0.6 with {\arrow[red,thick]{Stealth}}},
  postaction={decorate}
  ]
  (-2.121,2.121) to[out=-45, in=135, looseness=1] (-1.414,1.414) to[out=-45, in=180, looseness=1] (-1,0) to[out=0, in=180, looseness=1] (1,0) to[out=0, in=135, looseness=1] (1.414,-1.414) to[out=-45, in=135, looseness=1] (2.121,-2.121); 
  \fill (0,0) circle (2.5pt);
  \node[below right] at (-0.1,0.1) {$0$};

  \fill (-1,0.6) circle (2.5pt);
  \node[right] at (-1,0.6) {$p_1$};

  \fill (0,0.6) circle (2.5pt);
  \node[above] at (0,0.6) {$p_m$};

  \fill (0.7,0.6) circle (2.5pt);
  \node[above] at (0.7,0.6) {$p_N$};

  \fill (-1,-0.6) circle (2.5pt);
  \node[right] at (-1,-0.6) {$p_1^*$};

  \fill (0,-0.6) circle (2.5pt);
  \node[below] at (0,-0.6) {$p_m^*$};

  \fill (0.7,-0.6) circle (2.5pt);
  \node[below] at (0.7,-0.6) {$p_N^*$};

\node[above, left] at (-2.121+1.5,2.121+0.4) {$\infty\ee^{3\pi\ii/4}$};

\node[below, right] at (2.121-0.3,-2.121-0.1) {$\infty\ee^{-\ii\pi/4}$};

  \draw[gray] node at (2,2) {$\boxed{z}$};

\def\shift{6.5}
  \draw[gray, thin, ->] ({-2.5+\shift},0) -- ({2.5+\shift},0);
  \draw[gray, thin, ->] ({0+\shift},-2.5) -- ({0+\shift},2.5);

  \draw[
  red, thick, decoration={markings, mark=at position 0.6 with {\arrow[red,thick]{Stealth}}},
  postaction={decorate}
  ]
  ({-2.121-0.1+\shift},2.121) to[out=-45, in=135, looseness=1] ({-1.414-0.1+\shift},1.414) to[out=-45, in=180, looseness=1] ({-1+\shift},0) to[out=0, in=180, looseness=1] ({0.25+\shift},0) to[out=0, in=-90, looseness=1] ({0.35+\shift},0.9) to[out=90,in=-45,looseness=1] ({-1.414+0.1+\shift},1.414) to[out=135, in=-45, looseness=1] ({-2.121+\shift},{2.121+0.1});

  \fill ({0+\shift},0) circle (2.5pt);
  \node[below right] at ({-0.1+\shift},0.1) {$0$};

  \fill ({-1+\shift},0.6) circle (2.5pt);
  \node[right] at ({-1+\shift},0.6) {$p_1$};

  \fill ({0+\shift},0.6) circle (2.5pt);
  \node[above] at ({0+\shift},0.6) {$p_m$};

  \fill ({0.7+\shift},0.6) circle (2.5pt);
  \node[above] at ({0.7+\shift},0.6) {$p_N$};

  \fill ({-1+\shift},-0.6) circle (2.5pt);
  \node[right] at ({-1+\shift},-0.6) {$p_1^*$};

  \fill ({0+\shift},-0.6) circle (2.5pt);
  \node[below] at ({0+\shift},-0.6) {$p_m^*$};

  \fill ({0.7+\shift},-0.6) circle (2.5pt);
  \node[below] at ({0.7+\shift},-0.6) {$p_N^*$};

\node[above, left] at (-2.121+1.5+\shift,2.121+0.4) {$\infty\ee^{3\pi\ii/4}$};

  \draw[gray] node at ({2+\shift},2) {$\boxed{z}$};

\def\shift{13}

  \draw[gray, thin, ->] ({-2.5+\shift},0) -- ({2.5+\shift},0);
  \draw[gray, thin, ->] ({0+\shift},-2.5) -- ({0+\shift},2.5);
  \draw[
  red, thick, decoration={markings, mark=at position 0.7 with {\arrow[red,thick]{Stealth}}},
  postaction={decorate}
  ]
  ({-2.121+\shift},2.121) to[out=-45, in=135, looseness=1] ({-1.414+\shift},1.414) to[out=-45, in=180, looseness=1] ({-1+\shift},0) to[out=0, in=180, looseness=1] ({0.25+\shift},0) to[out=0, in=135, looseness=1] ({0+\shift},0.6);

  \fill ({0+\shift},0) circle (2.5pt);
  \node[below right] at ({-0.1+\shift},0.1) {$0$};

  \fill ({-1+\shift},0.6) circle (2.5pt);
  \node[right] at ({-1+\shift},0.6) {$p_1$};

  \fill ({0+\shift},0.6) circle (2.5pt);
  \node[above] at ({0+\shift},0.6) {$p_m$};

  \fill ({0.7+\shift},0.6) circle (2.5pt);
  \node[above] at ({0.7+\shift},0.6) {$p_N$};

  \fill ({-1+\shift},-0.6) circle (2.5pt);
  \node[right] at ({-1+\shift},-0.6) {$p_1^*$};

  \fill ({0+\shift},-0.6) circle (2.5pt);
  \node[below] at ({0+\shift},-0.6) {$p_m^*$};

  \fill ({0.7+\shift},-0.6) circle (2.5pt);
  \node[below] at ({0.7+\shift},-0.6) {$p_N^*$};

\node[above, left] at (-2.121+1.5+\shift,2.121+0.4) {$\infty\ee^{3\pi\ii/4}$};

  \draw[gray] node at ({2+\shift},2) {$\boxed{z}$};

\end{tikzpicture}    
\end{center}
\caption{Left:  the contour $C_0$.  The contour $C_m$ for non-exceptional $m$ (center) and for exceptional $m$ (right).}
\label{fig:C0Cm}
\end{figure}
Recalling the antiderivative $L(z)$ of $u_0(z)$ that is single valued for large $|z|$ with $L(\infty)=0$, we define $L(z)$ along each contour $C_0,\dots,C_N$ by analytic continuation from this initial value.  Hence if $n=1,\dots,N$ is a non-exceptional index, then as $z\to\infty$ in the direction of orientation along $C_n$, one has that $\ee^{-\ii L(z)}\to E_n$ where $E_n$ is defined in \eqref{eq:XS-En-non-exceptional}.  Similarly, 
\begin{equation}
\mathop{\lim_{z\to \infty\ee^{-\ii\pi/4}}}_{z\in C_0}\ee^{-\ii L(z)} = E_0:= \ee^{2\pi(c_1+\cdots +c_N)}.
\label{eq:E0}
\end{equation}
We also recall our labeling convention that if there exists any non-exceptional index, then $N$ is non-exceptional.
We may then define a function $h(z)=h(z;t,x)$ on each contour $C_0,\dots,C_N$ by
\begin{equation}
    h(z;t,x):=\frac{(z-x)^2}{4t}+L(z),\quad x\in\mathbb{R},\quad t>0,\quad z\in \bigcup_{n=0}^NC_n.
\end{equation}

With these ingredients, we now give the solution of the Cauchy problem for \eqref{eq:XS-BO} with the initial datum \eqref{eq:XS-u0-rational}, by first defining two $(N+1)\times (N+1)$ matrices.  We set
\begin{equation}
\begin{aligned}
    A_{j1}(t,x)&:=\int_{C_{j-1}}u_0(z)\ee^{-\ii h(z;t,x)}\,\dd z\\
    B_{j1}(t,x)&:=\int_{C_{j-1}}\ee^{-\ii h(z;t,z)}\,\dd z
\end{aligned}
\label{eq:XS-AB-first-column}
\end{equation}
for $j=1,\dots,N+1$, and then 
\begin{equation}
    A_{jk}(t,x)=B_{jk}(t,x):=\int_{C_{j-1}}\frac{\ee^{-\ii h(z;t,x)}}{z-p_{k-1}}\,\dd z,\quad j=1,\dots,N+1,\; k=2,\dots,N+1.
\label{eq:XS-AB-other-elements}
\end{equation}
Then we have the following adaptation to rational initial data of the explicit formula first obtained in \cite{Gerard22}.
\begin{theorem}[\protect{Solution of the Benjamin-Ono Cauchy problem, \cite[Theorem 4]{BlackstoneGGM24a}}]
The solution of the Benjamin-Ono equation \eqref{eq:XS-BO} with initial condition \eqref{eq:XS-u0-rational} is given by $u(t,x)=\Pi u(t,x)+\Pi u(t,x)^*$, where
\begin{equation}
    \Pi u(t,x)=\frac{\det(\mathbf{A}(t,x))}{\det(\mathbf{B}(t,x))},\quad x\in\mathbb{R},\quad t>0.
\end{equation}
\label{thm:XS-exact}
\end{theorem}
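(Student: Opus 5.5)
The plan is to derive the determinant formula from the explicit formula of \cite{Gerard22}. That formula expresses the Szeg\H{o} projection of the solution through a resolvent on the Hardy space $L^2_+(\mathbb{R})$:
\begin{equation*}
\Pi u(t,x)=\frac{1}{2\pi\ii}\,I_+\!\left[(G-x)^{-1}\Pi u_0\right],\qquad G:=X^*+2tL_{u_0}.
\end{equation*}
Here $L_{u_0}=D-T_{u_0}$ is the Lax operator, $T_{u_0}f=\Pi(u_0f)$ is the Toeplitz operator with symbol $u_0$, $X^*$ is the adjoint of multiplication by the variable, and $I_+(f)$ denotes the functional $\lim_{\xi\to 0^+}\widehat f(\xi)$ (equivalently, the limit of $-\ii zf(z)$ as $z\to\infty$ along the imaginary axis, for suitable $f$). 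For rational data the key observation is that $\Pi u_0=\sum_n c_n^*/(x-p_n^*)$. Moreover, for $f\in L^2_+$ one has
\begin{equation*}
\Pi(u_0f)=u_0f-\sum_{n}\frac{c_nf(p_n)}{x-p_n},
\end{equation*}
since only the poles at $p_n$ in the upper half-plane must be removed. So $T_{u_0}$ differs from multiplication by $u_0$ by a rank-$N$ perturbation with the unknown constants $f(p_1),\dots,f(p_N)$.

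First I will write the equation $(G-x)f=\Pi u_0$ for $f\in L^2_+$ as an equation for the analytic continuation of $f$ to $\mathbb{C}_+$. To make $X^*$ local I will conjugate by the Fourier--Laplace representation $f(z)=\frac{1}{2\pi}\int_0^\infty \widehat f(\xi)\ee^{\ii z\xi}\dd\xi$. On the Fourier side $X^*$ acts as $\ii\,\dd/\dd\xi$ together with the boundary term producing $I_+$; on the $z$ side, $2tD=-2t\ii\,\dd/\dd z$ is local and $u_0$ acts by multiplication. I expect the cleanest route to be to seek $f$ through its inverse Laplace-type representation, as a contour integral in an auxiliary variable $z$ with kernel $\ee^{-\ii h(z;t,x)}$. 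The point is that $\ee^{-\ii h}$ with $h=(z-x)^2/(4t)+L(z)$ is precisely the integrating factor of the first-order operator $-2t\ii\,\dd/\dd z+z-x-2tu_0(z)$ (up to normalization, since $h'=(z-x)/(2t)+u_0$). The resulting first-order linear ODE has an inhomogeneity consisting of $\Pi u_0$ plus the rank-$N$ correction $\sum_n c_nf(p_n)/(z-p_n)$, plus one further unknown constant coming from the boundary term of $X^*$, which is proportional to $I_+(f)$. Variation of constants then writes any solution as a linear combination, with $N+1$ unknown coefficients, of the integrals
\begin{equation*}
\int_C \ee^{-\ii h(z;t,x)}\,\dd z,\qquad \int_C \frac{\ee^{-\ii h(z;t,x)}}{z-p_n}\,\dd z,\qquad n=1,\dots,N,
\end{equation*}
and the integral of $u_0\ee^{-\ii h}$. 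These are exactly the entries appearing in \eqref{eq:XS-AB-first-column}--\eqref{eq:XS-AB-other-elements}.

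Next I will impose the conditions that single out the true resolvent. Integrating by parts along each admissible contour $C_0,\dots,C_N$ shows that the combination is annihilated exactly when the boundary contributions vanish. For $C_0$ this means decay at both ends along the directions $\infty\ee^{3\pi\ii/4}$ and $\infty\ee^{-\ii\pi/4}$, where the Gaussian $\ee^{-\ii(z-x)^2/(4t)}$ decays. For non-exceptional $C_n$ it means the loop returns to $\infty\ee^{3\pi\ii/4}$. For exceptional $C_n$ it uses the vanishing of $\ee^{-\ii L}$ at $p_n$. Each contour thus yields one homogeneous linear relation, for $N+1$ relations in total. Together with the self-consistency requirement that the constants in the ansatz equal $f(p_n)$ and $I_+(f)$, this gives a square linear system whose coefficient matrix is $\mathbf{B}(t,x)$. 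Solving for the coefficient proportional to $I_+(f)$ by Cramer's rule replaces the first column by the integrals of $u_0\ee^{-\ii h}$, which gives $\det\mathbf{A}/\det\mathbf{B}$, with the factor $1/(2\pi\ii)$ absorbed by the normalization.

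The main obstacle is justifying that this contour-integral construction really produces the $L^2_+$ resolvent, and that $\det\mathbf{B}(t,x)\neq 0$ for all $t>0$ and $x\in\mathbb{R}$. For the first point I will verify directly that the candidate $f$ is analytic in $\mathbb{C}_+$ with $O(1/z)$ decay, so it lies in $L^2_+$, and then invoke uniqueness: $G-x$ is invertible on its domain because $G$ has spectrum only in the lower half-plane, as shown in \cite{Gerard22}. For the second point I would argue by contradiction: a nontrivial null vector of $\mathbf{B}$ would produce a nonzero element of $\ker(G-x)$, which is impossible. The exceptional indices need extra care, since there $C_n$ ends at $p_n$ and one must check the integrability and the boundary terms using the zero of $\ee^{-\ii L}$ at $p_n$ of order $-\ii c_n\ge1$.
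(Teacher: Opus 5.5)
The paper does not prove this statement; it imports it from \cite{BlackstoneGGM24a} as the rational specialization of the formula in \cite{Gerard22}. Your outline is the natural derivation and reproduces exactly the structure of $\mathbf{A}$ and $\mathbf{B}$: you turn the resolvent equation into a first-order ODE whose right side contains the $N+1$ constants $I_+(f),f(p_1),\dots,f(p_N)$, pair it with $\ee^{-\ii h}$ over $C_0,\dots,C_N$, and apply Cramer's rule. It needs two repairs, however.

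The first is a sign. For $\partial_tu=\partial_x(|D|u-u^2)$ the formula is $\Pi u(t,z)=\frac{1}{2\pi\ii}I_+[(X^*-2tL_{u_0}-z)^{-1}\Pi u_0]$. Already for $u_0=0$, your $X^*+2tD$ would produce $\ee^{-\ii t\xi^2}\widehat{u}_0$, which is the backward flow. Your operator $-2\ii t\,\dd/\dd z+z-x-2tu_0$ has integrating factor $\ee^{\ii(z-x)^2/(4t)-\ii L(z)}$, not $\ee^{-\ii h}$. With the correct sign one gets $2\ii tf'+(z-x+2tu_0)f=\Pi u_0-\frac{1}{2\pi\ii}I_+(f)+2t\sum_nc_nf(p_n)/(z-p_n)$, for which $\ee^{-\ii h}$ is indeed the integrating factor. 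Two minor points: $I_+(f)=-2\pi\ii\lim zf(z)$, not $\lim(-\ii zf(z))$. Also, Cramer's rule produces the column $\int_{C_{j-1}}\ee^{-\ii h}\Pi u_0\,\dd z$; this gives $\det\mathbf{A}$ only because $u_0-\Pi u_0=\sum_nc_n/(z-p_n)$ is a combination of the other columns' integrands.

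The genuine gap is the passage to real $x$. That is what the statement asserts, and it is the step you yourself flagged as the main obstacle.
\begin{itemize}
\item G\'erard's formula concerns $z\in\mathbb{C}_+$, where $(G-z)^{-1}$ exists.
\item Dissipativity, $\mathrm{Im}\langle Gf,f\rangle=-\frac{1}{4\pi}|I_+(f)|^2$, only confines $\sigma(G)$ to the \emph{closed} lower half-plane.
\item Real $x$ can lie in the spectrum: for $u_0=0$, $X^*-2tD=\ee^{-\ii tD^2}X^*\ee^{\ii tD^2}$ and $\sigma(X^*)=\overline{\mathbb{C}_-}$.
\end{itemize}
So neither the formula at $z=x$ nor ``$G-x$ is invertible'' is available, and your contradiction argument for $\det\mathbf{B}(t,x)\neq0$ has nothing to contradict.

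The fix is to run your construction with $h=(w-z)^2/(4t)+L(w)$ for $z\in\mathbb{C}_+$; all entries of $\mathbf{A}$ and $\mathbf{B}$ are entire in $z$. This gives $\Pi u(t,z)=\det\mathbf{A}(t,z)/\det\mathbf{B}(t,z)$ on $\mathbb{C}_+$, and you then let $z\to x$. That limit requires $\ker(G-x)=\{0\}$ for real $x$, which needs a separate argument. One that works:
\begin{itemize}
\item If $(G-x)f=0$, dissipativity gives $I_+(f)=0$. By the ODE, $f=O(s^{-2})$ and $(s-x)f=2tL_{u_0}f$.
\item Integrating by parts, $\mathrm{Im}\langle(s-x)f,L_{u_0}f\rangle=-\frac12\|f\|^2$. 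The finite-rank part of $T_{u_0}$ contributes only multiples of $\int_{\mathbb{R}}(s-x)f(s)(s-p_n^*)^{-1}\,\dd s=0$.
\item The left side equals $\mathrm{Im}(2t\|L_{u_0}f\|^2)=0$, so $f=0$.
\end{itemize}

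Finally, in your null-vector argument the ``self-consistency'' conditions are not extra equations; imposing them separately would overdetermine the system. They hold automatically. For $f$ built from a null vector $(y_0,\dots,y_N)$ of $\mathbf{B}$, $(G-x)f$ is a constant plus $\sum_n(y_n-2tc_nf(p_n))/(s-p_n)$. Since it also lies in $L^2_+$, the constant must vanish and the pole part lies in $L^2_+\cap L^2_-=\{0\}$.
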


\subsection{Expansion of matrix elements}
Fixing a bounded interval containing the similarity variable $\xi\in\mathbb{R}$ and writing $x=2 t^{1/2}\xi$, we now find the asymptotic behavior of the elements of $\mathbf{A}(t,x)$ and $\mathbf{B}(t,x)$ as $t\to+\infty$.
\subsubsection{First column of $\mathbf{A}(t,x)$}
According to \eqref{eq:XS-AB-first-column}, the integrals in the first column of the matrix $\mathbf{A}(t,2\xi\sqrt{t})$ are 
\begin{equation}
    A_{j1}(t,2\xi\sqrt{t})=\int_{C_{j-1}}u_0(z)\ee^{-\ii h(z;t,2\xi\sqrt{t})}\,\dd z=\ee^{-\ii\xi^2}\int_{C_{j-1}}u_0(z)\ee^{-\ii L(z)}\ee^{-\ii z^2/(4t)}\ee^{\ii \xi z/\sqrt{t}}\,\dd z.
\end{equation}
Since $u_0\ee^{-\ii L}\in L^1(C_{j-1})$ and is independent of $t$ and for each $\xi\in\mathbb{R}$, $z\mapsto\ee^{-\ii z^2/(4t)}\ee^{\ii\xi z/\sqrt{t}}$ has an upper bound on $C_{j-1}$ that is independent of $t$, it follows by dominated convergence that
\begin{equation}
    A_{j1}(t,2\xi\sqrt{t})=\ee^{-\ii\xi^2}\int_{C_{j-1}}u_0(z)\ee^{-\ii L(z)}\,\dd z + o(1),\quad t\to+\infty.
\label{eq:A-first-column-simple-estimate}
\end{equation}
We can improve the estimate by using the fact that on $C_{j-1}$ we have both $\ee^{-\ii z^2/(4t)}\ee^{\ii\xi z/\sqrt{t}}-1=O(1)$ and $\ee^{-\ii z^2/(4t)}\ee^{\ii\xi z/\sqrt{t}}-1=O(z/\sqrt{t})$, both holding uniformly for bounded $\xi$.  Since also $u_0(z)\ee^{-\ii L(z)}=O(1/(1+|z|^2))$ on $C_{j-1}$,
\begin{multline}
    \left|\int_{C_{j-1}}u_0(z)\ee^{-\ii L(z)}\left[\ee^{-\ii z^2/(4t)}\ee^{\ii\xi z/\sqrt{t}}-1\right]\,\dd z\right|\\
\begin{aligned}
    &\lesssim \int_{C_{j-1}}\frac{\min\{1,|z|/\sqrt{t}\}}{1+|z|^2}\,|\dd z|\\ &=\frac{1}{\sqrt{t}}\int_{C_{j-1}\cap\{|z|<\sqrt{t}\}}\frac{|z|\,|\dd z|}{1+|z|^2} +\int_{C_{j-1}\cap\{|z|\ge\sqrt{t}\}}\frac{|\dd z|}{1+|z|^2}.
\end{aligned}
\end{multline}
Clearly, the second term is $O(1/\sqrt{t})$ while the first term is $O(\ln(t)/\sqrt{t})$.  Therefore the error term $o(1)$ in \eqref{eq:A-first-column-simple-estimate} can be replaced with the sharper $O(\ln(t)/\sqrt{t})$ as $t\to+\infty$, which is uniform for bounded $\xi$.

Finally, we use $u_0(z)=L'(z)$ to explicitly evaluate the leading term.  If $j=1$, or if $j=2,\dots,N+1$ and $j-1$ is a non-exceptional index, then  
\begin{equation}
\begin{aligned}
    A_{j1}(t,2\xi\sqrt{t})&=\ii\ee^{-\ii\xi^2}\left[\ee^{-\ii L(z)}\right]_{\partial C_{j-1}} + O\left(\frac{\ln(t)}{\sqrt{t}}\right)\\ &=\ii\ee^{-\ii\xi^2}\left(E_{j-1}-1\right)+O\left(\frac{\ln(t)}{\sqrt{t}}\right),\quad t\to+\infty,\quad j=1,\dots,N+1.
\end{aligned}
    \label{eq:A-first-column-large-t}
\end{equation}
On the other hand, if $j>1$ and $j-1$ is an exceptional index, then $C_{j-1}$ terminates at $p_{j-1}$ and $\ee^{-\ii L(z)}$ vanishes to at least first order at $z=p_{j-1}$, so exactly the same result holds taking into account the definition \eqref{eq:XS-En-exceptional} of $E_{j-1}$ in the exceptional case.

\subsubsection{First column of $\mathbf{B}(t,x)$}
For $n=1,\dots,N$, recall the splitting of $C_n$ into an initial arc $C_{n}^{-}$ from $z=\infty\ee^{3\pi\ii/4}$ to $z=0$ and a terminal arc $C_{n}^{+}$ starting from $z=0$.  We similarly assume $C_0$ passes through the origin and split $C_0$ into $C_{0}^{-}$ and $C_{0}^{+}$.  According to \eqref{eq:XS-AB-first-column}, the elements of the first column of the matrix $\mathbf{B}(t,2\xi\sqrt{t})$ can then be written as
\begin{equation}
    B_{j1}(t,2\xi\sqrt{t})=\int_{C_{j-1}^{-}}\ee^{-\ii h(z;t,2\xi\sqrt{t})}\,\dd z + \int_{C_{j-1}^{+}}\ee^{-\ii h(z;t,2\xi\sqrt{t})}\,\dd z ,\quad j=1,\dots,N+1.
\end{equation}
Using the property that $\ee^{-\ii L(z)}\to 1$ as $z\to\infty$ on $C_{j-1}^{-}$ for all $j=1,\dots,N+1$,
\begin{equation}
\begin{aligned}
    \int_{C_{j-1}^{-}}\ee^{-\ii h(z;t,2\xi\sqrt{t})}\,\dd z &= \int_{C_{j-1}^{-}}\ee^{-\ii (z-2\xi\sqrt{t})^2/(4t)}\,\dd z + \int_{C_{j-1}^{-}}\left[\ee^{-\ii L(z)}-1\right]\ee^{-\ii (z-2\xi\sqrt{t})^2/(4t)}\,\dd z\\
    &=2\sqrt{t}\int_{\infty\ee^{3\pi\ii/4}}^0\ee^{-\ii (\zeta-\xi)^2}\,\dd\zeta + \int_{C_{j-1}^{-}}\left[\ee^{-\ii L(z)}-1\right]\ee^{-\ii (z-2\xi\sqrt{t})^2/(4t)}\,\dd z,
\end{aligned}
\end{equation}
where we rescaled by $z=2\sqrt{t}\zeta$ in the first integral.  We estimate the second integral by using that $|\ee^{-\ii L(z)}-1|\lesssim\min\{1,|z|^{-1}\}$ and that $\ee^{-\ii (z-2\xi\sqrt{t})^2/(4t)}=O(1)$ on $C_{j-1}^{-}$.  Therefore
\begin{multline}
        \left|\int_{C_{j-1}^{-}}\left[\ee^{-\ii L(z)}-1\right]\ee^{-\ii (z-2\xi\sqrt{t})^2/(4t)}\,\dd z\right|\\
        \lesssim \int_{C_{j-1}^{-}\cap\{|z|\le 1\}}\,|\dd z| + \int_{C_{j-1}^{-}\cap\{|z|>1\}}\left|\ee^{-\ii (z-2\xi\sqrt{t})^2/(4t)}\right|\frac{|\dd z|}{|z|}.
\end{multline}
The first term is independent of $t$, and by rescaling by $z=2\sqrt{t}\zeta$, the second integral is $O(\ln(t))$ as $t\to+\infty$.  We hence obtain
\begin{equation}
\int_{C_{j-1}^{-}}\ee^{-\ii h(z;t,2\xi\sqrt{t})}\,\dd z = 2\sqrt{t}\int_{\infty\ee^{3\pi\ii/4}}^0\ee^{-\ii (\zeta-\xi)^2}\,\dd\zeta + O(\ln(t))\quad t\to+\infty,
    \label{eq:B-first-column-C-}
\end{equation}
with the estimate being uniform for bounded $\xi$.

Similarly, if $j=1$ or if $j>1$ and $j-1$ is a non-exceptional index, then $\ee^{-\ii L(z)}\to E_{j-1}$ as $z\to\infty$ along $C_{j-1}^{+}$, so
\begin{multline}
\int_{C_{j-1}^{+}}\ee^{-\ii h(z;t,2\xi\sqrt{t})}\,\dd z \\    \begin{aligned}
&= E_{j-1}\int_{C_{j-1}^{+}}\ee^{-\ii (z-2\xi\sqrt{t})^2/(4t)}\,\dd z + \int_{C_{j-1}^{+}}\left[\ee^{-\ii L(z)}-E_{j-1}\right]\ee^{-\ii (z-2\xi\sqrt{t})^2/(4t)}\,\dd z\\ 
&=2E_{j-1}\sqrt{t}\int_0^{\infty\ee^{\ii\theta_{j-1}}}\ee^{-\ii (\zeta-\xi)^2}\,\dd\zeta + O(\ln(t)),\quad t\to+\infty,
    \end{aligned}
\label{eq:B-first-column-C+}
\end{multline}
where $\theta_n$ is the angle with which $C_{n}^{+}$ tends to infinity, namely $\theta_0=-\pi/4$ and $\theta_n=3\pi/4$ for $n=1,\dots,N$.
Combining \eqref{eq:B-first-column-C+} with \eqref{eq:B-first-column-C-}, the following holds uniformly for bounded $\xi$:
\begin{multline}
    B_{j1}(t,2\xi\sqrt{t})=2\sqrt{t}\left[\int_{\infty\ee^{3\pi\ii/4}}^0\ee^{-\ii (\zeta-\xi)^2}\,\dd\zeta + E_{j-1}\int_0^{\infty\ee^{\ii\theta_{j-1}}}\ee^{-\ii (\zeta-\xi)^2}\,\dd\zeta\right] + O(\ln(t)),\\ t\to+\infty,\quad j=1,\dots,N+1.
    \label{eq:B-first-column-large-t}
\end{multline}
If $j-1$ is an exceptional index (necessarily with $j\ge 2$), then $C_{j-1}^{+}$ is a bounded arc from $z=0$ to $z=p_{j-1}$, and the integral on the left-hand side in \eqref{eq:B-first-column-C+} remains bounded as $t\to+\infty$. Hence we again obtain \eqref{eq:B-first-column-large-t} because $E_{j-1}=0$ according to \eqref{eq:XS-En-exceptional}.

\subsubsection{Remaining elements of $\mathbf{A}(t,x)$ and $\mathbf{B}(t,x)$}
According to \eqref{eq:XS-AB-other-elements}, the elements of the remaining $N$ columns of $\mathbf{A}(t,x)$ and $\mathbf{B}(t,x)$ agree and are given for $j=1,\dots,N+1$ and $k=2,\dots,N+1$ by
\begin{equation}
    A_{jk}(t,2\xi\sqrt{t})=B_{jk}(t,2\xi\sqrt{t})=\int_{C_{j-1}}\frac{\ee^{-\ii h(z;t,2\xi\sqrt{t})}\,\dd z}{z-p_{k-1}}=\ee^{-\ii\xi^2}\int_{C_{j-1}}\frac{\ee^{-\ii L(z)}\ee^{-\ii z^2/(4t)}\ee^{\ii \xi z/\sqrt{t}}\,\dd z}{z-p_{k-1}}.
    \label{eq:AjkBjk}
\end{equation}

We again use the splitting at $z=0$ of $C_{j-1}$ into $C_{j-1}^{-}$ and $C_{j-1}^{+}$.  First we analyze the contribution from $C_{j-1}^{-}$:
\begin{multline}
    \ee^{-\ii\xi^2}\int_{C_{j-1}^{-}}\frac{\ee^{-\ii L(z)}\ee^{-\ii z^2/(4t)}\ee^{\ii \xi z/\sqrt{t}}\,\dd z}{z-p_{k-1}}\\
    \begin{aligned} 
    &=\ee^{-\ii\xi^2}\int_{C_{j-1}^{-}}\frac{\ee^{-\ii z^2/(4t)}\ee^{\ii\xi z/\sqrt{t}}\,\dd z}{z-p_{k-1}} +\ee^{-\ii\xi^2}\int_{C_{j-1}^{-}}\frac{[\ee^{-\ii L(z)}-1]\ee^{-\ii z^2/(4t)}\ee^{\ii\xi z/\sqrt{t}}\,\dd z}{z-p_{k-1}}\\
    &=\int_{\widetilde{C}_{j-1}^{-}}\frac{\ee^{-\ii (\zeta-\xi)^2}\,\dd\zeta}{\zeta-p_{k-1}/(2\sqrt{t})} +\ee^{-\ii\xi^2}\int_{C_{j-1}^{-}}\frac{\ee^{-\ii L(z)}-1}{z-p_{k-1}}\,\dd z + o(1)
    \end{aligned}
\end{multline}
as $t\to+\infty$ because $z\mapsto (\ee^{-\ii L(z)}-1)/(z-p_{k-1})$ is in $L^1(C_{j-1}^{-})$ and is independent of $t$.  In the first term, we rescaled by $z=2\sqrt{t}\zeta$, and $\widetilde{C}_{j-1}^{-}$ refers to the corresponding rescaled contour in the $\zeta$-plane.  With the substitution $w=\zeta-p_{k-1}/(2\sqrt{t})$, we get
\begin{equation}
    \int_{\widetilde{C}_{j-1}^{-}}\frac{\ee^{-\ii (\zeta-\xi)^2}\,\dd\zeta}{\zeta-p_{k-1}/(2\sqrt{t})} = \int_{\widetilde{C}_{j-1}^{-\prime}}\ee^{-\ii (w+p_{k-1}/(2\sqrt{t})-\xi)^2}\frac{\dd w}{w},
\end{equation}
where $\widetilde{C}_{j-1}^{-\prime}=\widetilde{C}_{j-1}^--p_{k-1}/(2\sqrt{t})=(C_{j-1}^--p_{k-1})/(2\sqrt{t})$ is a contour originating at $w=\infty\ee^{3\pi\ii/4}$ and terminating at $w=-p_{k-1}/(2\sqrt{t})$. Note that this terminal point lies near the origin in the lower half $w$-plane.  By a contour deformation avoiding $w=0$, we therefore obtain
\begin{equation}
\begin{split}
       \int_{\widetilde{C}_{j-1}^{-}}\frac{\ee^{-\ii (\zeta-\xi)^2}\,\dd\zeta}{\zeta-p_{k-1}/(2\sqrt{t})} &= \int_{W^-}\ee^{-\ii (w+p_{k-1}/(2\sqrt{t})-\xi)^2}\frac{\dd w}{w} \\ &\qquad{}+\int_1^{-p_{k-1}/(2\sqrt{t})}\frac{\ee^{-\ii(w+p_{k-1}/(2\sqrt{t})-\xi)^2}-\ee^{-\ii (p_{k-1}/(2\sqrt{t})-\xi)^2}}{w}\,\dd w\\
       &\qquad{}+\ee^{-\ii (p_{k-1}/(2\sqrt{t})-\xi)^2}\int_1^{-p_{k-1}/(2\sqrt{t})}\frac{\dd w}{w}\\
       &=-\frac{\ee^{-\ii\xi^2}}{2}\ln(t) +\int_{W^-}\ee^{-\ii (w-\xi)^2}\frac{\dd w}{w} \\&\qquad{}+ \int_1^0\frac{\ee^{-\ii (w-\xi)^2}-\ee^{-\ii\xi^2}}{w}\,\dd w + \ee^{-\ii\xi^2}\log\left(\frac{-p_{k-1}}{2}\right)+o(1).
\end{split}
\end{equation}
Here, the path of integration $W^-$ in the first integral is the concatenation of $\widetilde{C}_{j-1}^{-\prime}$ followed by a path in the lower half-plane from $w=-p_{k-1}/(2\sqrt{t})$ to $w=1$ (and hence passing \emph{below} the pole at the origin, so Cauchy-equivalent to the straight ray from $w=\infty\ee^{3\pi\ii/4}$ to $w=\ee^{3\pi\ii/4}$ followed by the arc of the unit circle with $3\pi/4<\arg(w)<2\pi$ as shown in Figure~\ref{fig:W-}), and the logarithm denotes the principal branch.
\begin{figure}[h]
\begin{center}
\begin{tikzpicture}[>=Stealth,scale=0.75]

  \draw[gray, thin, ->] (-2.5,0) -- (2.5,0);
  \draw[gray, thin, ->] (0,-2.5) -- (0,2.5);
  \draw[gray, thin, dashed] (0,0) circle (1);

  \draw[
    red, thick,
    decoration={
      markings,
      mark=at position 0.5 with {\arrow[red, thick]{Stealth}}
    },
    postaction={decorate}
  ]
  ({3*cos(135)}, {3*sin(135)}) -- ({cos(135)}, {sin(135)});

  \draw[
    red, thick,
    decoration={
      markings,
      mark=at position 0.55 with {\arrow[red, thick]{Stealth}}
    },
    postaction={decorate}
  ]
  ({cos(135)}, {sin(135)}) arc[start angle=135, end angle=360, radius=1];

  \node[below left] at (0,0) {$0$};

  \fill (1,0) circle (2pt);
  \node[below right] at (1,0) {$1$};

  \fill ({cos(135)},{sin(135)}) circle (2pt);

    \node at ({3*cos(135)+0.5},{3*sin(135)+0.25}) {$\infty\ee^{3\pi\ii/4}$};

    \draw[gray] node at (2,2) {$\boxed{w}$};

  \fill (0,0) circle (2pt);

\end{tikzpicture}
    
\end{center}
\caption{The contour $W^-$.}
\label{fig:W-}
\end{figure}
Summarizing, we have
\begin{multline}
    \ee^{-\ii\xi^2}\int_{C_{j-1}^{-}}\frac{\ee^{-\ii L(z)}\ee^{-\ii z^2/(4t)}\ee^{\ii \xi z/\sqrt{t}}\,\dd z}{z-p_{k-1}}\\
    \begin{aligned} 
    &=-\frac{\ee^{-\ii\xi^2}}{2}\ln(t)+\int_{W^-}\ee^{-\ii (w-\xi)^2}\frac{\dd w}{w} \\&\qquad{}+ \int_1^0\frac{\ee^{-\ii (w-\xi)^2}-\ee^{-\ii\xi^2}}{w}\,\dd w + \ee^{-\ii\xi^2}\log\left(\frac{-p_{k-1}}{2}\right)\\&\qquad{}+\ee^{-\ii\xi^2}\int_{C_{j-1}^{-}}\frac{\ee^{-\ii L(z)}-1}{z-p_{k-1}}\,\dd z+o(1).
    \end{aligned}    
    \label{eq:AjkBjk-minus}
\end{multline}

We can similarly analyze the contribution from $C_{j-1}^{+}$, assuming that either $j=1$ or $j>1$ and $j-1$ is a non-exceptional index.  In either case $\ee^{-\ii L(z)}\to E_{j-1}$ as $z\to\infty$ along $C_{j-1}^{+}$:
\begin{multline}
    \ee^{-\ii\xi^2}\int_{C_{j-1}^{+}}\frac{\ee^{-\ii L(z)}\ee^{-\ii z^2/(4t)}\ee^{\ii \xi z/\sqrt{t}}\,\dd z}{z-p_{k-1}}\\
    \begin{aligned} 
    &=E_{j-1}\ee^{-\ii\xi^2}\int_{C_{j-1}^{+}}\frac{\ee^{-\ii z^2/(4t)}\ee^{\ii\xi z/\sqrt{t}}\,\dd z}{z-p_{k-1}} +\ee^{-\ii\xi^2}\int_{C_{j-1}^{+}}\frac{[\ee^{-\ii L(z)}-E_{j-1}]\ee^{-\ii z^2/(4t)}\ee^{\ii\xi z/\sqrt{t}}\,\dd z}{z-p_{k-1}}\\
    &=E_{j-1}\int_{\widetilde{C}_{j-1}^{+}}\frac{\ee^{-\ii (\zeta-\xi)^2}\,\dd\zeta}{\zeta-p_{k-1}/(2\sqrt{t})} +\ee^{-\ii\xi^2}\int_{C_{j-1}^{+}}\frac{\ee^{-\ii L(z)}-E_{j-1}}{z-p_{k-1}}\,\dd z + o(1),
    \end{aligned}
\label{eq:Cjminus1plus-split}
\end{multline}
where $\widetilde{C}_{j-1}^{+}$ is the rescaled contour in the $\zeta=z/(2\sqrt{t})$ plane, and 
\begin{multline}
    \int_{\widetilde{C}_{j-1}^{+}}\frac{\ee^{-\ii (\zeta-\xi)^2}\,\dd\zeta}{\zeta-p_{k-1}/(2\sqrt{t})} = \frac{\ee^{-\ii\xi^2}}{2}\ln(t) +\int_{W^+_{jk}}\ee^{-\ii(w-\xi)^2}\frac{\dd w}{w}\\+\int_0^1\frac{\ee^{-\ii (w-\xi)^2}-\ee^{-\ii\xi^2}}{w}\,\dd w -\ee^{-\ii\xi^2}\log\left(\frac{-p_{k-1}}{2}\right)+o(1).
\end{multline}
Here, the logarithm denotes the principal branch, and $W^+_{jk}$ denotes the concatenation of a path in $\mathbb{C}_-$ from $w=1$ to $w=-p_{k-1}/(2\sqrt{t})$ followed by $\widetilde{C}_{j-1}^{+\prime}=(C_{j-1}^{+}-p_{k-1})/(2\sqrt{t})$.  By Cauchy's theorem, we may take $W^+_{jk}$ to be the contour shown in Figure~\ref{fig:W+}.  
\begin{figure}[h]
\begin{center}
    \begin{tikzpicture}[>=Stealth, scale=0.75]
  \draw[gray, thin, ->] (-2.5,0) -- (2.5,0);
  \draw[gray, thin, ->] (0,-2.5) -- (0,2.5);
  \draw[gray, thin, dashed] (0,0) circle (1);

  \draw[
    red, thick,
    decoration={
      markings,
      mark=at position 0.7 with {\arrow[red, thick]{Stealth}}
    },
    postaction={decorate}
  ]
  (1,0) arc[start angle=0, end angle=-45, radius=1];

  \draw[
    red, thick,
    decoration={
      markings,
      mark=at position 0.5 with {\arrow[red, thick]{Stealth}}
    },
    postaction={decorate}
  ]
  ({cos(-45)}, {sin(-45)}) -- ({3*cos(-45)}, {3*sin(-45)});

  \fill (0,0) circle (2.5pt);
  \node[below left] at (0,0) {$0$};

  \fill (1,0) circle (2.5pt);
  \node[above right] at (1,0) {$1$};

  \node at ({3*cos(-45)+1},{3*sin(-45)}) {$\infty\ee^{-\ii\pi/4}$};

  \draw[gray] node at (2,2) {$\boxed{w}$};

  \fill ({cos(-45)},{sin(-45)}) circle (2.5pt);
  
    \def\shift{6.5}

  \draw[gray, thin, ->] (-2.5+\shift,0) -- (2.5+\shift,0);
  \draw[gray, thin, ->] (0+\shift,-2.5) -- (0+\shift,2.5);
  \draw[gray, thin, dashed] (\shift,0) circle (1);

  \draw[
    red, thick,
    decoration={
      markings,
      mark=at position 0.5 with {\arrowreversed[red, thick]{Stealth}}
    },
    postaction={decorate}
  ]
  ({3*cos(135)+\shift}, {3*sin(135)}) -- ({cos(135)+\shift}, {sin(135)});

   \draw[
    red, thick,
    decoration={
      markings,
      mark=at position 0.55 with {\arrowreversed[red, thick]{Stealth}}
    },
    postaction={decorate}
  ]
  ({cos(135)+\shift}, {sin(135)}) arc[start angle=135, end angle=360, radius=1];

  \fill (\shift,0) circle (2.5pt);
  \node[below left] at (\shift,0) {$0$};

  \fill (1+\shift,0) circle (2.5pt);
  \node[above right] at (1+\shift,0) {$1$};

  \fill ({cos(135)+\shift},{sin(135)}) circle (2.5pt);

    \node at ({3*cos(135)+\shift+0.5},{3*sin(135)+0.25}) {$\infty\ee^{3\pi\ii/4}$};

    \draw[gray] node at ({2+\shift},2) {$\boxed{w}$};

\def\shift{13}

  \draw[gray, thin, ->] (-2.5+\shift,0) -- (2.5+\shift,0);
  \draw[gray, thin, ->] (0+\shift,-2.5) -- (0+\shift,2.5);
  \draw[gray, thin, dashed] (\shift,0) circle (1);

  \draw[
    red, thick,
    decoration={
      markings,
      mark=at position 0.5 with {\arrowreversed[red, thick]{Stealth}}
    },
    postaction={decorate}
  ]
  ({3*cos(135)+\shift}, {3*sin(135)}) -- ({cos(135)+\shift}, {sin(135)});

  \draw[
    red, thick,
    decoration={
      markings,
      mark=at position 0.55 with {\arrowreversed[red, thick]{Stealth}}
    },
    postaction={decorate}
  ]
  ({cos(135)+\shift}, {sin(135)}) arc[start angle=135, end angle=0, radius=1];

  \fill (\shift,0) circle (2.5pt);
  \node[below left] at (\shift,0) {$0$};

  \fill (1+\shift,0) circle (2.5pt);
  \node[below right] at (1+\shift,0) {$1$};

  \fill ({cos(135)+\shift},{sin(135)}) circle (2.5pt);

       \node at ({3*cos(135)+\shift+0.5},{3*sin(135)+0.25}) {$\infty\ee^{3\pi\ii/4}$};

    \draw[gray] node at ({2+\shift},2) {$\boxed{w}$};
\end{tikzpicture}
\end{center}
\caption{The contour $W^+_{jk}$ for $j=1$ (left), and for non-exceptional $j-1$ with $1<j<k$ (center) and $j\ge k>1$ (right).}
\label{fig:W+}
\end{figure}

Therefore,
\begin{multline}
    \ee^{-\ii\xi^2}\int_{C_{j-1}^{+}}\frac{\ee^{-\ii L(z)}\ee^{-\ii z^2/(4t)}\ee^{\ii\xi z/\sqrt{t}}\,\dd z}{z-p_{k-1}}\\
    \begin{aligned}
        &=\frac{E_{j-1}\ee^{-\ii\xi^2}}{2}\ln(t)+E_{j-1}\int_{W^+_{jk}}\ee^{-\ii(w-\xi)^2}\frac{\dd w}{w}\\
        &\qquad{}+E_{j-1}\int_0^1\frac{\ee^{-\ii (w-\xi)^2}-\ee^{-\ii\xi^2}}{w}\,\dd w-E_{j-1}\ee^{-\ii\xi^2}\log\left(\frac{-p_{k-1}}{2}\right)\\
        &\qquad{}+\ee^{-\ii\xi^2}\int_{C_{j-1}^{+}}\frac{\ee^{-\ii L(z)}-E_{j-1}}{z-p_{k-1}}\,\dd z +o(1).
    \end{aligned}
    \label{eq:AjkBjk-plus}
\end{multline}
On the other hand, if $j-1$ is an exceptional index, then $C_{j-1}^{+}$ terminates at $p_{j-1}$, and there is no need to split up the integral in \eqref{eq:Cjminus1plus-split}.  Applying dominated convergence directly, one again obtains \eqref{eq:AjkBjk-plus} because $E_{j-1}=0$ according to \eqref{eq:XS-En-exceptional}.

Combining the contributions \eqref{eq:AjkBjk-minus} from $C_{j-1}^{-}$ and \eqref{eq:AjkBjk-plus} from $C_{j-1}^{+}$ in \eqref{eq:AjkBjk} gives that for all $j=1,\dots,N+1$ and $k=2,\dots,N+1$,
\begin{multline}
    A_{jk}(t,2\xi\sqrt{t})=B_{jk}(t,2\xi\sqrt{t}) = (E_{j-1}-1)\frac{\ee^{-\ii\xi^2}}{2}\ln(t)
    \\+E_{j-1}\int_{W^+_{jk}}\ee^{-\ii (w-\xi)^2}\frac{\dd w}{w} +\int_{W^-}\ee^{-\ii(w-\xi)^2}\frac{\dd w}{w}
    \\
    +(E_{j-1}-1)\int_0^1\frac{\ee^{-\ii(w-\xi)^2}-\ee^{-\ii\xi^2}}{w}\,\dd w+
    K_{jk}\ee^{-\ii\xi^2} + o(1)
    \label{eq:AjkBjk-large-t}
\end{multline}
holds as $t\to+\infty$ uniformly for bounded $\xi$, where $K_{jk}$ are constants defined by
\begin{equation}
    K_{jk}:=    (1-E_{j-1})\log\left(\frac{-p_{k-1}}{2}\right)
    +\int_{C_{j-1}^{-}}\frac{\ee^{-\ii L(z)}-1}{z-p_{k-1}}\,\dd z +\int_{C_{j-1}^{+}}\frac{\ee^{-\ii L(z)}-E_{j-1}}{z-p_{k-1}}\,\dd z. 
    \label{eq:Kjk}
\end{equation}  
\begin{lemma}
If there exists a non-exceptional index (and so by convention $N$ is non-exceptional), then 
    $K_{1k}=K_{N+1,k}$ holds for $k=2,\dots,N+1$.
    Otherwise (if all indices are exceptional), then $K_{1k}=-2\pi\ii$ for all $k=2,\dots,N+1$.
    \label{lem:firstrow-lastrow}
\end{lemma}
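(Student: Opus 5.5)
The plan is to exploit that $K_{jk}$ does not depend on the particular contour within its Cauchy-equivalence class. The integrand $(\ee^{-\ii L(z)}-E_{j-1})/(z-p_{k-1})$ on $C_{j-1}^{+}$, and likewise $(\ee^{-\ii L(z)}-1)/(z-p_{k-1})$ on $C_{j-1}^{-}$, is $O(z^{-2})$ at infinity. This holds because $L$ is single valued near $z=\infty$ with $L(\infty)=0$, so $\ee^{-\ii L(z)}$ tends to its limiting value with an $O(z^{-1})$ error. Consequently arcs at infinity contribute nothing, and the contours may be deformed freely in the punctured sphere without changing $K_{jk}$.

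\emph{Case with a non-exceptional index.} Here $N$ is non-exceptional. Comparing \eqref{eq:E0} with \eqref{eq:XS-En-non-exceptional}, I first note that $E_0=E_N=\ee^{2\pi(c_1+\cdots+c_N)}$, so the logarithmic terms $(1-E_{j-1})\log(-p_{k-1}/2)$ in \eqref{eq:Kjk} agree for $j=1$ and $j=N+1$. Next I choose $C_N$ concretely in terms of $C_0$:
\begin{itemize}
\item take $C_N^{-}:=C_0^{-}$;
\item take $C_N^{+}$ to follow $C_0^{+}$ out to a large radius $R$ near $\arg z=-\pi/4$, and then follow the circular arc $|z|=R$ counterclockwise to $\arg z=3\pi/4$.
\end{itemize}
This is an admissible $C_N$: all of $p_1,\dots,p_N$ lie on its left, the $p_n^*$ lie on its right, and it passes through $0$ and $\infty$. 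With this choice the $C^{-}$ integrals coincide. The two $C^{+}$ integrals differ only by the integral over the large arc. On that arc $\ee^{-\ii L}$ is the single-valued branch near infinity, equal to $E_0=E_N$ at both ends, so the integrand is $O(R^{-2})$ there and the arc contribution vanishes as $R\to\infty$. Hence $K_{1k}=K_{N+1,k}$. Since $K_{N+1,k}$ is invariant under deformation of $C_N$ (Cauchy's theorem applied away from the branch points $p_n,p_n^*$), this choice of $C_N$ is legitimate.

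\emph{All indices exceptional.} Now $c_n=\ii m_n$ with $m_n\in\{1,2,\dots\}$. Then
\begin{equation}
L(z)=\sum_n \ii m_n\log\frac{z-p_n}{z-p_n^*},\qquad \ee^{-\ii L(z)}=R(z):=\prod_{n=1}^N\left(\frac{z-p_n}{z-p_n^*}\right)^{m_n},
\end{equation}
so $\ee^{-\ii L}$ is a single-valued rational function. It is analytic in $\mathbb{C}_+$ and vanishes at each $p_n$, and $E_0=\ee^{2\pi\ii\sum m_n}=1$. The logarithmic term in $K_{1k}$ therefore drops out, and the two pieces of \eqref{eq:Kjk} combine into
\begin{equation}
K_{1k}=\int_{C_0}\frac{R(z)-1}{z-p_{k-1}}\,\dd z .
\end{equation}
I will close $C_0$ by a large counterclockwise arc in the upper half-plane, whose contribution vanishes because the integrand is $O(z^{-2})$. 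The resulting positively oriented closed curve encloses $p_1,\dots,p_N$ and none of the $p_n^*$. Inside it the only singularity is the simple pole at $p_{k-1}$, with residue $R(p_{k-1})-1=-1$. Therefore $K_{1k}=-2\pi\ii$.

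The main obstacle I expect is the first case: verifying that the modified contour is a valid $C_N$ (correct sides for all poles, Jordan property) and that branch bookkeeping is consistent along the large arc. The latter reduces to single-valuedness of $L$ near $\infty$, which follows from \eqref{eq:XS-u0-in-L1}.
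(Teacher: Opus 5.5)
Your proof is correct and takes essentially the same route as the paper. In the first case, $E_0=E_N$ makes the logarithmic terms and the $C^{+}$ integrands agree, and the $C^{+}$ integrals then coincide by Cauchy's theorem and the $O(z^{-2})$ decay in the pole-free region between $C_0^{+}$ and $C_N^{+}$; you make this explicit by building $C_N^{+}$ from $C_0^{+}$ plus a large arc, whose endpoint should stop just short of $\arg z=3\pi/4$ and then run out to $\infty\ee^{3\pi\ii/4}$ alongside $C_0^{-}$ so that $C_N$ stays Jordan. In the all-exceptional case you, like the paper, merge the two pieces into one integral over $C_0$, close in the upper half-plane, and pick up only the residue $\ee^{-\ii L(p_{k-1})}-1=-1$ at $p_{k-1}$, which gives $K_{1k}=-2\pi\ii$.
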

\begin{proof}
Suppose first that $N$ is non-exceptional.  
Since for all $j=1,\dots,N+1$, $C_{j-1}^-$ always originates at $z=\infty\ee^{3\pi\ii/4}$ and terminates at $z=0$, the integrals in \eqref{eq:Kjk} over $C_{0}^{-}$ and $C_{N}^{-}$ agree for all $k$.  
Furthermore, comparing \eqref{eq:XS-En-non-exceptional} with  \eqref{eq:E0} gives $E_0=E_{N}$.  Therefore the logarithmic terms in $K_{1k}$ and $K_{N+1,k}$ agree, as do the integrands over $C_{0}^{+}$ and $C_{N}^{+}$.  Moreover, both of these common integrands are analytic and $O(z^{-2})$ for $z$ on and in between the contours $C_0^+$ and $C_N^+$, so the integrals over $C_{0}^{+}$ and $C_{N}^{+}$ also agree, proving that $K_{1k}=K_{N+1,k}$ for $k=2,\dots,N+1$.

On the other hand, suppose that all indices are exceptional.  Then by \eqref{eq:E0} we have $E_0=1$ because all residues $c_1,\dots,c_N$ are imaginary integers.  Therefore, the logarithmic term in $K_{1k}$ vanishes, and also the two integrals have the same integrand so they can be combined into a single integral over $C_0$.  In that resulting integral, the path of integration can be closed in the upper half-plane and the integral evaluated by residues.  Since $\ee^{-\ii L(z)}$ vanishes at $z=p_n$ for all $n=1,\dots,N$, the only contribution comes from the residue of $-1/(z-p_{k-1})$ at $z=p_{k-1}$, proving that $K_{1k}=-2\pi\ii$ for $k=2,\dots,N+1$.
\end{proof}

\subsection{Computing $\det(\mathbf{A}(t,2\xi\sqrt{t}))$}
Setting $\boldsymbol{\eta}:=(E_0-1,\dots,E_{N}-1)^\top$,
we express each column $k=1,\dots,N+1$ of $\mathbf{A}(t,2\xi\sqrt{t})$ in the form $c_k\boldsymbol{\eta}+\boldsymbol{\gamma}^{(k)}$ for a scalar $c_k$ and a vector $\boldsymbol{\gamma}^{(k)}$.  Thus, $\mathbf{A}(t,2\xi\sqrt{t})$ is a rank-one perturbation of a matrix $\boldsymbol{\Gamma}:=(\boldsymbol{\gamma}^{(1)},\dots,\boldsymbol{\gamma}^{(N+1)})$.  Indeed, according to \eqref{eq:A-first-column-large-t}, we can choose $c_1=\ii\ee^{-\ii\xi^2}$ and $\boldsymbol{\gamma}^{(1)}=O(\ln(t)/\sqrt{t})$.  Similarly, according to \eqref{eq:AjkBjk-large-t}, we can choose
\begin{equation}
    c_k=\frac{\ee^{-\ii\xi^2}}{2}\ln(t) + \int_0^1\frac{\ee^{-\ii (w-\xi)^2}-\ee^{-\ii\xi^2}}{w}\,\dd w,\quad k=2,\dots,N+1,
\label{eq:ck-remaining-columns}
\end{equation}
and for $j=1,\dots,N+1$ and $k=2,\dots,N+1$,
\begin{equation}
    \Gamma_{jk}=E_{j-1}\int_{W^+_{jk}}\ee^{-\ii (w-\xi)^2}\frac{\dd w}{w} + \int_{W^-}\ee^{-\ii (w-\xi)^2}\frac{\dd w}{w} + K_{jk}\ee^{-\ii\xi^2} + o(1).
\label{eq:Gammajk-remaining-columns}
\end{equation}
Using column multilinearity to expand $\det(\mathbf{A}(t,2\xi\sqrt{t}))$ then gives
\begin{equation}
    \det(\mathbf{A}(t,2\xi\sqrt{t}))=\det(\boldsymbol{\Gamma}) + \sum_{k=1}^{N+1}c_k\det(\boldsymbol{\Gamma}\mathop{\longleftarrow}^k\boldsymbol{\eta}).
\end{equation}
It follows that $\det(\boldsymbol{\Gamma})=O(\ln(t)/\sqrt{t})$, and that if $k>1$, $\displaystyle c_k\det(\boldsymbol{\Gamma}\mathop{\longleftarrow}^k \boldsymbol{\eta})=O(\ln(t)^2/\sqrt{t})$.  Consequently,
\begin{equation}
    \det(\mathbf{A}(t,2\xi\sqrt{t}))=\ii\ee^{-\ii\xi^2}\det(\mathbf{P}(\xi)) + O(t^{-1/2}\ln(t)^2),\quad t\to +\infty,\quad \mathbf{P}(\xi):=\lim_{t\to\infty}\boldsymbol{\Gamma}\mathop{\longleftarrow}^1\boldsymbol{\eta}.
\end{equation}
Note that for $k=2,\dots,N+1$,
\begin{multline}
P_{1k}(\xi):=E_0\int_{W^+_{1k}}\ee^{-\ii (w-\xi)^2}\frac{\dd w}{w} + \int_{W^-}\ee^{-\ii (w-\xi)^2}\frac{\dd w}{w} + K_{1k}\ee^{-\ii\xi^2}
\\
\begin{aligned}
&= (1-E_0)\int_{W^-}\ee^{-\ii (w-\xi)^2}\frac{\dd w}{w} + E_0\int_{L_-}\ee^{-\ii(w-\xi)^2}\frac{\dd w}{w} + K_{1k}\ee^{-\ii\xi^2}\\
&=(1-E_0)\int_{W^-}\ee^{-\ii (w-\xi)^2}\frac{\dd w}{w} + E_0\int_{L_+}\ee^{-\ii(w-\xi)^2}\frac{\dd w}{w} + (2\pi\ii E_0+K_{1k})\ee^{-\ii\xi^2}\\
&=(1-E_0)\int_{W^-}\ee^{-\ii (w-\xi)^2}\frac{\dd w}{w}+E_0\ee^{\ii\pi/4}\sqrt{\pi}F(\xi) + (2\pi\ii E_0+K_{1k})\ee^{-\ii\xi^2}.
\end{aligned}
\label{eq:P-first}
\end{multline}
Here, $L_+$ and $L_-$ are Jordan contours from $w=\infty\ee^{3\pi\ii/4}$ to $w=\infty\ee^{-\ii\pi/4}$ passing respectively above and below the origin, and in the last line we used the definition \eqref{eq:F-intro} of $F(\xi)$.
On the other hand, for $2\le j,k\le N+1$,
\begin{multline}
    P_{jk}(\xi):=E_{j-1}\int_{W^+_{jk}}\ee^{-\ii (w-\xi)^2}\frac{\dd w}{w} + \int_{W^-}\ee^{-\ii(w-\xi)^2}\frac{\dd w}{w}+K_{jk}\ee^{-\ii\xi^2}\\
        =(1-E_{j-1})\int_{W^-}\ee^{-\ii (w-\xi)^2}\frac{\dd w}{w} + (2\pi\ii E_{j-1}\delta_{j\ge k} + K_{jk})\ee^{-\ii\xi^2}.
\label{eq:P-second}
\end{multline}
Since by definition the first column of $\mathbf{P}(\xi)$ is the vector $\boldsymbol{\eta}=(E_0-1,\dots,E_{N}-1)^\top$, defining $f(\xi):=E_0\ee^{\ii\pi/4}\sqrt{\pi}F(\xi)$ it follows that 
\begin{multline}
    \det(\mathbf{P}(\xi))\\=\begin{vmatrix}
        E_0-1 & f(\xi)+(2\pi\ii E_0+K_{12})\ee^{-\ii\xi^2} & \cdots  & f(\xi)+(2\pi\ii E_0 + K_{1,N+1})\ee^{-\ii\xi^2}\\
        \vdots &  & \ee^{-\ii\xi^2}\mathbf{T} &   \\
        E_{N}-1 & &&
        \end{vmatrix}.
\end{multline}
where $\mathbf{T}$ is the $N\times N$ matrix with elements
\begin{equation}
    T_{mn}:=K_{m+1,n+1}+2\pi\ii E_{m}\delta_{m\ge n},\quad m,n=1,\dots,N.
\label{eq:T-def}
\end{equation}
If there exists a non-exceptional index so that by convention $N$ is not exceptional, then $E_{N}=E_0$ and subtracting the last row from the first using Lemma~\ref{lem:firstrow-lastrow} gives
\begin{equation}
    \det(\mathbf{P}(\xi))=f(\xi)\begin{vmatrix}
        0 & 1 & \cdots  & 1\\
        -\mathbf{d} &  & \ee^{-\ii\xi^2}\mathbf{T} &   
        \end{vmatrix}=E_0\ee^{\ii\pi/4}\sqrt{\pi}F(\xi)\begin{vmatrix}
        0 & 1 & \cdots  & 1\\
        -\mathbf{d} &  & \ee^{-\ii\xi^2}\mathbf{T} &   
        \end{vmatrix},
\end{equation}
where $\mathbf{d}:=(1-E_1,\dots,1-E_{N})^\top$. 
On the other hand, if all indices are exceptional, then without subtracting the last row from the first we apply Lemma~\ref{lem:firstrow-lastrow} and use $E_0=1$ to obtain the same result.
Expanding along the first row gives
\begin{equation}
    \det(\mathbf{P}(\xi))=E_0\ee^{\ii\pi/4}\sqrt{\pi}F(\xi)\ee^{-\ii (N-1)\xi^2}\sum_{n=1}^N\det(\mathbf{T}\mathop{\longleftarrow}^n\mathbf{d}).
\end{equation}
Comparing the definition of $\mathbf{J}$ in \eqref{eq:XS-J-def} with \eqref{eq:Kjk} and \eqref{eq:T-def}, we see that each column of $\mathbf{T}$ differs from the corresponding column of $\mathbf{J}$ by a multiple of $\mathbf{d}$ which makes no contribution to $\det(\mathbf{T}\displaystyle\mathop{\longleftarrow}^n\mathbf{d})$ for any $n$.
We conclude that as $t\to+\infty$,
\begin{equation}
\begin{aligned}
    \det(\mathbf{A}(t,2\xi\sqrt{t}))&=-E_0\ee^{-\ii\pi/4}\sqrt{\pi}F(\xi)\ee^{-\ii N\xi^2}\sum_{n=1}^N\det(\mathbf{J}\mathop{\longleftarrow}^n\mathbf{d}) + O(t^{-1/2}\ln(t)^2)\\
    &=-E_0\ee^{-\ii\pi/4}\sqrt{\pi}F(\xi)\ee^{-\ii N\xi^2}\Delta + O(t^{-1/2}\ln(t)^2),\quad t\to+\infty,
\end{aligned}
\label{eq:detA-asymp}
\end{equation}
where $\Delta$ is defined in \eqref{eq:XS-DeltaN}.  This asymptotic is uniform for bounded $\xi$.

\subsection{Computing $\det(\mathbf{B}(t,2\xi\sqrt{t}))$}
To compute $\mathbf{B}(t,2\xi\sqrt{t})$, we use the same decomposition of columns $k=2,\dots,N+1$ as for $\mathbf{A}(t,2\xi\sqrt{t})$ (since these columns are identical), writing column $k$ as $c_k\boldsymbol{\eta} +\boldsymbol{\gamma}^{(k)}$ using \eqref{eq:ck-remaining-columns}--\eqref{eq:Gammajk-remaining-columns}.  For column $1$ of $\mathbf{B}(t,2\xi\sqrt{t})$ we use \eqref{eq:B-first-column-large-t} and the fact that $\theta_0=-\pi/4$ while $\theta_m=3\pi/4$ for $m=1,\dots,N$ to get, after computing a Gaussian integral, that the first column of $\mathbf{B}(t,2\xi\sqrt{t})$ is given by $\mathbf{b}_1(t,2\xi\sqrt{t})=c_1\boldsymbol{\eta} + \boldsymbol{\gamma}^{(1)}$ where
\begin{equation}
    c_1=2t^{1/2}\int_0^{\infty\ee^{3\pi\ii/4}}\ee^{-\ii (\zeta-\xi)^2}\,\dd\zeta,\quad \boldsymbol{\gamma}^{(1)}=2t^{1/2}\sqrt{\pi}\ee^{-\ii\pi/4}E_0\mathbf{e}_1 + O(\ln(t)),
\end{equation}
and $\mathbf{e}_1$ denotes the first unit vector in $\mathbb{C}^{N+1}$, $\mathbf{e}_1:=(1,0,\dots,0)^\top$.
By column multilinearity, we then get as before,
\begin{equation}
    \det(\mathbf{B}(t,2\xi\sqrt{t}))=\det(\boldsymbol{\Gamma}) + \sum_{k=1}^{N+1}c_k\det(\boldsymbol{\Gamma}\mathop{\longleftarrow}^k\boldsymbol{\eta}).
\end{equation}
Since the first column of $\boldsymbol{\Gamma}$ is now proportional to $t^{1/2}$, we get $\det(\boldsymbol{\Gamma})=O(t^{1/2})$ as $t\to+\infty$.  Likewise, $c_1\det(\boldsymbol{\Gamma}\displaystyle\mathop{\longleftarrow}^1\boldsymbol{\eta})=O(t^{1/2})$.  The remaining terms are larger: for $k=2,\dots,N+1$ we have
\begin{multline}
c_k\det(\boldsymbol{\Gamma}\mathop{\longleftarrow}^k\boldsymbol{\eta})\\
\begin{aligned}
 &= \left(\frac{\ee^{-\ii\xi^2}}{2}\ln(t) + O(1)\right)\left(2t^{1/2}\sqrt{\pi}\ee^{-\ii\pi/4}E_0\det((\mathbf{e}_1,\boldsymbol{\gamma}^{(2)},\dots,\boldsymbol{\gamma}^{(N+1)})\mathop{\longleftarrow}^k\boldsymbol{\eta}) + O(\ln(t))\right)\\
&=t^{1/2}\ln(t)\sqrt{\pi}\ee^{-\ii\pi/4}E_0\ee^{-\ii\xi^2} \lim_{t\to+\infty}\det((\mathbf{e}_1,\boldsymbol{\gamma}^{(2)},\dots,\boldsymbol{\gamma}^{(N+1)})\mathop{\longleftarrow}^k\boldsymbol{\eta}) + o(t^{1/2}\ln(t)).
\end{aligned}
\end{multline}
Taking the indicated limit, columns $2,\dots,N+1$ of $(\mathbf{e}_1,\boldsymbol{\gamma}^{(2)},\dots,\boldsymbol{\gamma}^{(N+1)})$ converge to the corresponding columns of $\mathbf{P}(\xi)$ simplified in \eqref{eq:P-first}--\eqref{eq:P-second}; since one of these columns is then to be replaced by $\boldsymbol{\eta}=(E_0-1,\dots,E_{N}-1)^\top$, all contributions to columns of $\mathbf{P}(\xi)$ proportional to $\boldsymbol{\eta}$ are cancelled. Expanding the determinant along the first column then yields 
\begin{equation}
\begin{aligned}
\lim_{t\to+\infty}\det((\mathbf{e}_1,\boldsymbol{\gamma}^{(2)},\dots,\boldsymbol{\gamma}^{(N+1)})\mathop{\longleftarrow}^k\boldsymbol{\eta}) &= -\det(\ee^{-\ii\xi^2}\mathbf{T}\mathop{\longleftarrow}^{k-1}\mathbf{d}) \\ &= -\ee^{-\ii (N-1)\xi^2}\det(\mathbf{T}\mathop{\longleftarrow}^{k-1}\mathbf{d}),\quad k=2,\dots,N+1.
\end{aligned}
\end{equation}
Once again, the matrix $\mathbf{T}$ can be replaced with $\mathbf{J}$ defined in \eqref{eq:XS-J-def} and we conclude that 
\begin{equation}
\begin{aligned}
    \frac{\det(\mathbf{B}(t,2\xi\sqrt{t}))}{t^{1/2}\ln(t)}&=-E_0\ee^{-\ii\pi/4}\sqrt{\pi}\ee^{-\ii N\xi^2}\sum_{n=1}^N\det(\mathbf{J}\mathop{\longleftarrow}^n\mathbf{d}) + o(1)\\
    &=-E_0\ee^{-\ii\pi/4}\sqrt{\pi}\ee^{-\ii N\xi^2}\Delta + o(1),\quad t\to +\infty
\end{aligned}
\label{eq:detB-asymp}
\end{equation}
holds in the $L^\infty_\mathrm{loc}$ sense with respect to $\xi$.

\subsection{Combining the results and completing the proof}
  Combining \eqref{eq:detA-asymp} and \eqref{eq:detB-asymp} noting that the product of factors $-E_0\ee^{-\ii\pi/4}\sqrt{\pi}\ee^{-\ii N\xi^2}$ is obviously nonzero, one sees that provided $\Delta\neq 0$, 
\begin{equation}
    \frac{\det(\mathbf{A}(t,2\xi\sqrt{t}))}{\det(\mathbf{B}(t,2\xi\sqrt{t}))} = \frac{F(\xi)}{t^{1/2}\ln(t)} + o\left(\frac{1}{t^{1/2}\ln(t)}\right),\quad t\to +\infty
\end{equation}
holds uniformly for bounded $\xi$.  Using this result in Theorem~\ref{thm:XS-exact} then completes the proof of Theorem~\ref{thm:XS-main} in the general case.

\appendix
\section{Proof of Theorem~\ref{thm:XS-generic}}
The calculation of the reflection coefficient $\beta(\lambda)$ for $\lambda>0$ for initial data $u_0$ of the form \eqref{eq:XS-u0-rational} with residues $c_1,\dots,c_N$ subject to \eqref{eq:XS-u0-in-L1} was first accomplished in \cite{MillerW16a}.  The procedure expresses $\beta(\lambda)$ as an improper real integral (taking $\epsilon=1$ in \cite[Equations (22)--(25)]{MillerW16a})
\begin{equation}
    \beta(\lambda)=\ii\ee^{-2\pi(c_1+\cdots+c_N)}\int_\mathbb{R}\ee^{-\ii \lambda z}\ee^{-\ii L(z)}\left[u_0(z)-\sum_{n=1}^N\frac{v_n(\lambda)}{z-p_n}\right]\,\dd z, \quad\lambda>0,
\end{equation}
where $L'(z)=u_0(z)$ and $v_1(\lambda),\dots,v_N(\lambda)$ are the components of the solution $\mathbf{v}(\lambda)$ of the linear system
\begin{equation}
    \mathbf{M}(\lambda)\mathbf{v}(\lambda)=\mathbf{r}(\lambda),\quad\lambda>0,
\end{equation}
in which $\mathbf{M}(\lambda)\in\mathbb{C}^{N\times N}$ and $\mathbf{r}(\lambda)\in\mathbb{C}^N$ (denoted respectively $\mathbf{A}^>(\lambda)$ and $\mathbf{b}^>(\lambda)$ in \cite{MillerW16a}) have components
\begin{equation}
    M_{mn}(\lambda):=\int_{C_m}\frac{\ee^{-\ii\lambda z}\ee^{-\ii L(z)}\dd z}{z-p_n},\quad r_m(\lambda):=-\lambda\int_{C_m}\ee^{-\ii\lambda z}\ee^{-\ii L(z)}\,\dd z.
\end{equation}
Here, the contours $\{C_m\}_{m=1}^N$ (denoted $\{C_m^>\}_{m=1}^N$ in \cite{MillerW16a}) have the meaning explained in the introduction, with the additional property that the unbounded part(s) of $C_m$ tend to $z=\infty$ in the lower half-plane to ensure the convergence of the integrals for $\lambda>0$.  To be precise, we fix a number $R>\max\{|p_1|,\dots,|p_N|\}$, and if $m$ is a non-exceptional index, we take $C_m$ to be the path consisting of the vertical line from $z=-\ii\infty$ to $z=-\ii R$, followed by a positively oriented Jordan loop in the domain $|z|\le R$ beginning and ending at $z=-\ii R$ with $z=p_1,\dots,p_m$ in its interior and all other poles $z=p_{m+1},\dots,p_N$ and $p_1^*,\dots,p_N^*$ in its exterior, followed by the vertical line from $z=-\ii R$ to $z=-\ii\infty$.  If $m$ is exceptional, we omit the final ray and take the part of $C_m$ with $|z|\le R$ to be a simple arc from $z=-\ii R$ to $z=p_m$ avoiding all other poles.    
See Figure~\ref{fig:Cm>}.
\begin{figure}[h]
\begin{center}
    \begin{tikzpicture}[>=Stealth, scale=0.75]

  \draw[gray, thin, ->] (-2.5,0) -- (2.5,0);
  \draw[gray, thin, ->] (0,-2.5) -- (0,2.5);
  \draw[gray, thin, dashed] (0,0) circle (1.75);

  \draw[
  red, thick, decoration={markings, mark=at position 0.75 with {\arrow[red,thick]{Stealth}}},
  postaction={decorate}
  ]
  ({1.75*cos(-90)+0.1},{1.75*sin(-90)}) to[out=90, in=-90, looseness=1] (-1.2,-0.6) to[out=90, in=-135, looseness=1] (0,0) to[out=45, in=0, looseness=1] (0,1.4) to[out=180, in=90, looseness=1] (-1.6,0) to[out=-90, in=135, looseness=1] (-1.132,-1.132) to[out=-45, in=90, looseness=1] (-0.1,-1.75); 

  \draw[
    red, thick,
    decoration={
      markings,
      mark=at position 0.7 with {\arrow[red, thick]{Stealth}}
    },
    postaction={decorate}
  ]
  ({2.5*cos(-90)+0.1}, {2.5*sin(-90)}) -- ({1.75*cos(-90)+0.1}, {1.75*sin(-90)});

  \draw[
    red, thick,
    decoration={
      markings,
      mark=at position 0.7 with {\arrow[red, thick]{Stealth}}
    },
    postaction={decorate}
  ]
  ({1.75*cos(-90)-0.1}, {1.75*sin(-90)}) -- ({2.5*cos(-90)-0.1}, {2.5*sin(-90)});
  
  \fill (0,0) circle (2.5pt);
  \node[below right] at (-0.1,0.1) {$0$};

  \fill (-1,0.6) circle (2.5pt);
  \node[right] at (-1,0.6) {$p_1$};

  \fill (0,0.6) circle (2.5pt);
  \node[above] at (0,0.6) {$p_m$};

  \fill (0.7,0.6) circle (2.5pt);
  \node[right] at (0.7,0.6) {$p_N$};

  \fill (-1,-0.6) circle (2.5pt);
  \node[right] at (-1,-0.6) {$p_1^*$};

  \fill (0,-0.6) circle (2.5pt);
  \node[below] at (0,-0.6) {$p_m^*$};

  \fill (0.7,-0.6) circle (2.5pt);
  \node[right] at (0.7,-0.6) {$p_N^*$};

 \fill ({1.75*cos(-90)},{1.75*sin(-90)}) circle (2.5pt);
 \node[below left] at ({1.75*cos(-90)},{1.75*sin(-90)}) {$-\ii R$};

    \node[below right] at ({1.75*cos(-45)-0.18},{1.75*sin(-45)+0.2}) {$|z|=R$};

  \draw[gray] node at (2,2) {$\boxed{z}$};

\def\shift{9}

  \draw[gray, thin, ->] ({-2.5+\shift},0) -- ({2.5+\shift},0);
  \draw[gray, thin, ->] ({0+\shift},-2.5) -- ({0+\shift},2.5);
  \draw[gray, thin, dashed] ({0+\shift},0) circle (1.75);

  \draw[
  red, thick, decoration={markings, mark=at position 0.75 with {\arrow[red,thick]{Stealth}}},
  postaction={decorate}
  ]
  ({1.75*cos(-90)+\shift},{1.75*sin(-90)}) to[out=90, in=-90, looseness=1] ({-1.2+\shift},-0.6) to[out=90, in=-135, looseness=1] ({0+\shift},0) to[out=45, in=-45, looseness=1] ({0+\shift},0.6); 

  \draw[
    red, thick,
    decoration={
      markings,
      mark=at position 0.7 with {\arrow[red, thick]{Stealth}}
    },
    postaction={decorate}
  ]
  ({2.5*cos(-90)+\shift}, {2.5*sin(-90)}) -- ({1.75*cos(-90)+\shift}, {1.75*sin(-90)});

  \fill ({0+\shift},0) circle (2.5pt);
  \node[below right] at ({-0.1+\shift},0.1) {$0$};

  \fill ({-1+\shift},0.6) circle (2.5pt);
  \node[right] at ({-1+\shift},0.6) {$p_1$};

  \fill ({0+\shift},0.6) circle (2.5pt);
  \node[above] at ({0+\shift},0.6) {$p_m$};

  \fill ({0.7+\shift},0.6) circle (2.5pt);
  \node[right] at ({0.7+\shift},0.6) {$p_N$};

  \fill ({-1+\shift},-0.6) circle (2.5pt);
  \node[right] at ({-1+\shift},-0.6) {$p_1^*$};

  \fill ({0+\shift},-0.6) circle (2.5pt);
  \node[below] at ({0+\shift},-0.6) {$p_m^*$};

  \fill ({0.7+\shift},-0.6) circle (2.5pt);
  \node[right] at ({0.7+\shift},-0.6) {$p_N^*$};

  \fill ({1.75*cos(-90)+\shift},{1.75*sin(-90)}) circle (2.5pt);
  \node[below left] at ({1.75*cos(-90)+\shift},{1.75*sin(-90)}) {$-\ii R$};

  \node[below right] at ({1.75*cos(-45)-0.18+\shift},{1.75*sin(-45)+0.2}) {$|z|=R$};

  \draw[gray] node at ({2+\shift},2) {$\boxed{z}$};

\end{tikzpicture}    
\end{center}
\caption{The contour $C_m$ for non-exceptional $m$ (left) and for exceptional $m$ (right).}
\label{fig:Cm>}
\end{figure}

Integration by parts (using $\ee^{-\ii L(p_m)}=0$ if $m$ is an exceptional index) shows that $r_m(\lambda)$ can be equivalently written in the form
\begin{equation}
    r_m(\lambda)=\int_{C_m}\ee^{-\ii\lambda z}\ee^{-\ii L(z)}u_0(z)\,\dd z.
\end{equation}
Then, the definition of $\beta(\lambda)$ and the system for the coefficients $v_1(\lambda),\dots,v_N(\lambda)$ can be combined into a single linear system as follows:
\begin{multline}
    \begin{bmatrix} -\ii\ee^{2\pi (c_1+\cdots+c_N)} & \int_\mathbb{R}\frac{\ee^{-\ii\lambda z}\ee^{-\ii L(z)}\dd z}{z-p_1} & \cdots & \int_\mathbb{R}\frac{\ee^{-\ii\lambda z}\ee^{-\ii L(z)}\dd z}{z-p_N}\\
    0 & & & &\\
    \vdots & & \mathbf{M}(\lambda)&&\\
    0 & & & &
\end{bmatrix}\begin{bmatrix}\beta(\lambda)\\v_1(\lambda)\\\vdots\\v_N(\lambda)\end{bmatrix} \\ = \begin{bmatrix}\int_\mathbb{R}\ee^{-\ii\lambda z}\ee^{-\ii L(z)}u_0(z)\,\dd z\\
    \int_{C_1}\ee^{-\ii\lambda z}\ee^{-\ii L(z)}u_0(z)\,\dd z\\
    \vdots\\
    \int_{C_N}\ee^{-\ii\lambda z}\ee^{-\ii L(z)}u_0(z)\,\dd z
    \end{bmatrix}.
\end{multline}
Using Cramer's rule and expanding the denominator determinant along the first column yields a closed-form formula for $\beta(\lambda)$:
\begin{equation}
    \beta(\lambda)=\frac{\ii\ee^{-2\pi(c_1+\cdots+c_N)}\mathcal{N}(\lambda)}{\det(\mathbf{M}(\lambda))},
\label{eq:reflectioncoefficient}
\end{equation}
where $\mathcal{N}(\lambda)$ denotes the $(N+1)\times (N+1)$ determinant
\begin{equation}
    \mathcal{N}(\lambda):=\begin{vmatrix}
        \int_{\mathbb{R}}\ee^{-\ii \lambda z}\ee^{-\ii L(z)}u_0(z)\,\dd z & \int_{\mathbb{R}}\frac{\ee^{-\ii\lambda z}\ee^{-\ii L(z)}\dd z}{z-p_1} & \cdots & \int_{\mathbb{R}}\frac{\ee^{-\ii\lambda z}\ee^{-\ii L(z)}\dd z}{z-p_N}\\
        \int_{C_1}\ee^{-\ii\lambda z}\ee^{-\ii L(z)}u_0(z)\,\dd z & & & &\\
        \vdots & &\mathbf{M}(\lambda) & &\\
        \int_{C_N}\ee^{-\ii\lambda z}\ee^{-\ii L(z)}u_0(z)\,\dd z & & & &
    \end{vmatrix}.
\label{eq:RC-numerator-determinant}
\end{equation}
Since it is more convenient to work with absolutely convergent integrals, we 
consider the contour $\mathbb{R}$ in the first row of $\mathcal{N}(\lambda)$ to be deformed downwards at both ends toward $z=-\ii\infty$.

The first goal is to characterize the analytic properties of the matrix elements $M_{mn}(\lambda)$ in a right-neighborhood of $\lambda=0$.  If $m$ is non-exceptional, the ratio of values of $\ee^{-\ii L(z)}$ at corresponding points on the final and initial vertical segments is a constant, namely $E_{m}$ given by \eqref{eq:XS-En-non-exceptional}.  Therefore,
\begin{equation}
    M_{mn}(\lambda)=\int_{C_m\cap\{|z|\le R\}}\frac{\ee^{-\ii\lambda z}\ee^{-\ii L(z)}}{z-p_n}\,\dd z + (E_{m}-1)\int_{-\ii R}^{-\ii \infty}\frac{\ee^{-\ii \lambda z}\ee^{-\ii L(z)}}{z-p_n}\,\dd z,
\label{eq:Amn-magenta}
\end{equation}
where in the second term we take the value for $\ee^{-\ii L(z)}$ for the initial vertical segment, which tends to $1$ as $z\to -\ii\infty$.
The same holds if $m$ is exceptional and $E_m=0$ according to \eqref{eq:XS-En-exceptional}, since the final vertical segment of $C_m$ is absent and we observe that the first term is convergent at the finite endpoint $z=p_m$ regardless of whether or not $m=n$ because $\ee^{-\ii L(z)}$ vanishes to at least first order at $z=p_m$.

The first term on the right-hand side in \eqref{eq:Amn-magenta} is an entire function of $\lambda$, with power series expansion
\begin{equation}
    F_{mn}(\lambda):=\int_{C_m\cap\{|z|\le R\}}\frac{\ee^{-\ii\lambda z}\ee^{-\ii L(z)}}{z-p_n}\,\dd z = \sum_{k=0}^\infty\frac{(-\ii\lambda)^k}{k!}\int_{C_m\cap\{|z|\le R\}}\frac{z^k\ee^{-\ii L(z)}}{z-p_n}\,\dd z.
\label{eq:Fmn-entire}
\end{equation}
For the second term, since $|z|\ge R$, we may use the convergent Laurent series of $\ee^{-\ii L(z)}/(z-p_n)$ in the integrand:
\begin{equation}
    \frac{\ee^{-\ii L(z)}}{z-p_n}=\sum_{k=1}^\infty\frac{\mu_k^{(n)}}{z^k},\quad |z|\ge R,\quad \mu_1^{(n)}=1.
    \label{eq:LaurentSeries}
\end{equation}
In particular, this implies that the function $g_n(\lambda)$ defined by
\begin{equation}
g_n(\lambda):=\frac{1}{2\pi\ii}    \oint_{|z|=R_1>R}\frac{\ee^{-\ii\lambda z}\ee^{-\ii L(z)}}{z-p_n}\,\dd z = \sum_{k=1}^\infty \frac{\mu_k^{(n)}}{2\pi\ii}\oint_{|z|=R_1>R}\frac{\ee^{-\ii\lambda z}}{z^k}\,\dd z = \sum_{k=1}^\infty\frac{\mu_k^{(n)}}{(k-1)!}(-\ii\lambda)^{k-1}
\label{eq:Gn-entire}
\end{equation}
is an entire function with $g_n(0)=1$.  Using \eqref{eq:LaurentSeries} in the integrand,
\begin{equation}
    \int_{-\ii R}^{-\ii\infty}\frac{\ee^{-\ii\lambda z}\ee^{-\ii L(z)}}{z-p_n}\,\dd z = \sum_{k=1}^\infty\mu_k^{(n)}\int_{-\ii R}^{-\ii\infty}\frac{\ee^{-\ii\lambda z}\,\dd z}{z^k}=
    \sum_{k=1}^\infty\mu_k^{(n)}(\ii \lambda)^{k-1}\int_{R\lambda}^\infty\frac{\ee^{-w}\,\dd w}{w^k}.
\label{eq:outer-integral-with-pole-expansion}
\end{equation}
The integral over $w$ can be expanded as follows:
\begin{equation}
\begin{aligned}
    \int_{R\lambda}^\infty\frac{\ee^{-w}\,\dd w}{w^k} &= \int_1^\infty\frac{\ee^{-w}\,\dd w}{w^k} + \int_{R\lambda}^1\frac{\ee^{-w}\,\dd w}{w^k} \\ &= 
    \int_1^\infty\frac{\ee^{-w}\,\dd w}{w^k} +\sum_{j=0}^\infty\frac{(-1)^j}{j!}\int_{R\lambda}^1 w^{j-k}\,\dd w\\
    &=\int_1^\infty\frac{\ee^{-w}\,\dd w}{w^k} + \mathop{\sum_{j=0}^\infty}_{j\neq k-1}\frac{(-1)^j(1-(R\lambda)^{j-k+1})}{j!(j-k+1)} - \frac{(-1)^{k-1}}{(k-1)!}\ln(R\lambda)\\
    &=\frac{(-1)^k}{(k-1)!}\ln(\lambda) +c_k(R)-\frac{1}{(R\lambda)^{k-1}}\mathop{\sum_{j=0}^\infty}_{j\neq k-1}\frac{(-R\lambda)^j}{j!(j-k+1)},
\end{aligned}
\end{equation}
where
\begin{equation}
    c_k(R):=\int_1^\infty\frac{\ee^{-w}\,\dd w}{w^k} +\mathop{\sum_{j=0}^\infty}_{j\neq k-1}\frac{(-1)^j}{j!(j-k+1)} +\frac{(-1)^k\ln(R)}{(k-1)!}.
\end{equation}
We note in passing that 
\begin{equation}
c_1(R)=-\ln(R)-\gamma    
\label{eq:Euler}
\end{equation}
where $\gamma$ is the Euler constant.
Consequently,
\begin{multline}
    \int_{-\ii R}^{-\ii\infty}\frac{\ee^{-\ii\lambda z}\ee^{-\ii L(z)}}{z-p_n}\,\dd z = -\ln(\lambda)g_n(\lambda) \\+     
    \sum_{k=1}^\infty\mu_k^{(n)}\left[c_k(R)(\ii\lambda)^{k-1} - \left(\frac{\ii}{R}\right)^{k-1}\mathop{\sum_{j=0}^\infty}_{j\neq k-1}\frac{(-R\lambda)^j}{j!(j-k+1)}\right],
\end{multline}
where $g_n(\lambda)$ is the entire function defined by \eqref{eq:Gn-entire}.  The sum on the second line can be shown to be analytic for $|\lambda|<R^{-1}$, so combining with the entire function $F_{mn}(\lambda)$ defined by \eqref{eq:Fmn-entire} and using \eqref{eq:Amn-magenta} gives
\begin{equation}
    M_{mn}(\lambda)=H_{mn}(\lambda) -(E_{m}-1)g_n(\lambda)\ln(\lambda),
\label{eq:Amn-exact}
\end{equation}  
where $H_{mn}(\lambda)$ is analytic for $|\lambda|<1/R$.  

Since the second term in \eqref{eq:Amn-exact} is an element of a rank-$1$ matrix, in particular we have deduced the analytic structure of $\det(\mathbf{M}(\lambda))$ near $\lambda=0$:
\begin{equation}
    \det(\mathbf{M}(\lambda)) = \ln(\lambda)\sum_{n=1}^Ng_n(\lambda)\det(\mathbf{H}(\lambda)\mathop{\longleftarrow}^n\mathbf{d}) + \det(\mathbf{H}(\lambda)),
\label{eq:denominator-determinant}
\end{equation}
where $\mathbf{d}:=(1-E_1,\dots,1-E_{N})^\top$ as defined in \eqref{eq:XS-DeltaN} and $\displaystyle \mathbf{H}(\lambda)\mathop{\longleftarrow}^n\mathbf{d}$ is the matrix $\mathbf{H}(\lambda)$ with its $n$th column replaced by $\mathbf{d}$.

Now we turn our attention to the integrals in the first row and column of the numerator determinant $\mathcal{N}(\lambda)$ defined in \eqref{eq:RC-numerator-determinant}.   Recall that if there exists at least one non-exceptional index, we have chosen to order the poles so that $N$ is non-exceptional.  In this case, we can relate the integrals in the first row of $\mathcal{N}(\lambda)$, columns $2,\dots,N+1$, to those in the last row by 
\begin{equation}
\begin{aligned}
    \int_\mathbb{R}\frac{\ee^{-\ii\lambda z}\ee^{-\ii L(z)}}{z-p_n}\,\dd z -\int_{C_N}\frac{\ee^{-\ii\lambda z}\ee^{-\ii L(z)}}{z-p_n}\,\dd z &= -E_{N}\oint_{|z|=R_1>R}\frac{\ee^{-\ii\lambda z}\ee^{-\ii L(z)}}{z-p_n}\,\dd z \\ &= -2\pi\ii E_{N}g_n(\lambda)\\
    &=-2\pi\ii\ee^{2\pi(c_1+\cdots+c_N)}g_n(\lambda),\quad\text{for $N$ non-exceptional},
\end{aligned}
\label{eq:first-row-2-N+1}
\end{equation}
for $n=1,\dots,N$, where $g_n(\lambda)$ is the entire function defined by \eqref{eq:Gn-entire} and we used \eqref{eq:XS-En-non-exceptional} to evaluate $E_N$.   On the other hand, if all indices $m=1,\dots,N$ are exceptional, there is no need to subtract the last row, because the path can be closed in the lower half-plane and there are no singularities in the upper half-plane.  Therefore, for $n=1,\dots,N$,
\begin{equation}
\begin{aligned}
    \int_\mathbb{R}\frac{\ee^{-\ii\lambda z}\ee^{-\ii L(z)}}{z-p_n}\,\dd z &= -2\pi\ii g_n(\lambda)\\
    &=-2\pi\ii\ee^{2\pi(c_1+\cdots+c_N)}g_n(\lambda),\quad\text{for all indices exceptional},
\end{aligned}
\label{eq:first-row-2-N+1-all-exceptional}
\end{equation}
where the last line follows by the definition of an exceptional index.

Now we work on the integrals in the first column of the numerator determinant $\mathcal{N}(\lambda)$ in \eqref{eq:RC-numerator-determinant}, rows $2,\dots,N+1$.  Proceeding as above, for $m=1,\dots,N$ we have
\begin{equation}
    \int_{C_m}\ee^{-\ii\lambda z}\ee^{-\ii L(z)}u_0(z)\,\dd z =
    f_m(\lambda) +(E_{m}-1)\int_{-\ii R}^{-\ii\infty}\ee^{-\ii\lambda z}\ee^{-\ii L(z)}u_0(z)\,\dd z
\label{eq:N-first-column-start}
\end{equation}
where $f_m(\lambda)$ is the entire function
\begin{equation}
    f_m(\lambda):=\int_{C_m\cap \{|z|\le R\}}\ee^{-\ii\lambda z}\ee^{-\ii L(z)}u_0(z)\,\dd z = \sum_{k=0}^\infty\frac{(-\ii\lambda)^k}{k!}
    \int_{C_m\cap\{|z|\le R\}}z^k\ee^{-\ii L(z)}u_0(z)\,\dd z.
\end{equation}
Since $L'(z)=u_0(z)$, $f_m(0)=\ii (E_{m}-1)\ee^{-\ii L(-\ii R)}$.  This holds whether or not $m$ is an exceptional index; in the exceptional case that $C_m$ terminates at $z=p_m$, we recall that $E_{m}=0$ and observe that $\ee^{-\ii L(z)}$ vanishes to at least first order at $z=p_m$.  For the second term on the right-hand side in \eqref{eq:N-first-column-start}, we use the Laurent expansion of $\ee^{-\ii L(z)}u_0(z)$ convergent for $|z|\ge R$:
\begin{equation}
    \ee^{-\ii L(z)}u_0(z)=\sum_{k=2}^\infty \frac{\nu_k}{z^k},\quad |z|\ge R.
\label{eq:Laurent-2}
\end{equation}
Note that since the left-hand side is the derivative of $\ii \ee^{-\ii L(z)}$ and $L(z)$ is single valued in the domain $|z|\ge R$ with $L(z)\to 0$ as $z\to\infty$, 
\begin{equation}
    \ee^{-\ii L(z)}=1+\sum_{k=2}^\infty\frac{\ii\nu_k}{(k-1)z^{k-1}},\quad |z|\ge R.
\label{eq:ExpMinusIL-Laurent}
\end{equation}
Then $g(\lambda)$ defined by
\begin{equation}
\begin{aligned}
    g(\lambda):=\frac{1}{2\pi\ii}\oint_{|z|=R_1>R}\ee^{-\ii \lambda z}\ee^{-\ii L(z)}u_0(z)\,\dd z &= \sum_{k=2}^\infty\frac{\nu_k}{2\pi\ii}\oint_{|z|=R_1>R}\frac{\ee^{-\ii\lambda z}}{z^k}\,\dd z \\ &= \sum_{k=2}^\infty\frac{\nu_k}{(k-1)!}(-\ii\lambda)^{k-1}
\end{aligned}
\end{equation}
is entire, and $g(0)=0$.
Furthermore, integrating the Laurent series \eqref{eq:Laurent-2} term-by-term as was done in \eqref{eq:outer-integral-with-pole-expansion} we get
\begin{multline}
    \int_{-\ii R}^{-\ii\infty}\ee^{-\ii\lambda z}\ee^{-\ii L(z)}u_0(z)\,\dd z \\
\begin{aligned}
    &= \sum_{k=2}^\infty \nu_k(\ii\lambda)^{k-1}\int_{R\lambda}^\infty\frac{\ee^{-w}\,\dd w}{w^k} \\ &= -\ln(\lambda)g(\lambda) 
    +\sum_{k=2}^\infty\nu_k\left[c_k(R)(\ii\lambda)^{k-1}-\left(\frac{\ii}{R}\right)^{k-1}\mathop{\sum_{j=0}^\infty}_{j\neq k-1}\frac{(-R\lambda)^j}{j!(j-k+1)}\right],
\end{aligned}
\end{multline}
where again the sum on the last line is analytic for $|\lambda|<R^{-1}$, taking the value 
\begin{equation}
\sum_{k=2}^\infty\frac{\nu_k}{k-1}\left(\frac{\ii}{R}\right)^{k-1} = -\ii (\ee^{-\ii L(-\ii R)}-1)
\end{equation}
at $\lambda=0$, where we used \eqref{eq:ExpMinusIL-Laurent}.  Combining the results shows that
\begin{equation}
    \int_{C_m}\ee^{-\ii\lambda z}\ee^{-\ii L(z)}u_0(z)\,\dd z = h_m(z)-(E_{m}-1)\ln(\lambda)g(\lambda),\quad m=1,\dots,N,
\label{eq:first-column-exact}
\end{equation}
where $h_m(\lambda)$ is analytic for $|\lambda|<R^{-1}$ with 
\begin{equation}
h_m(0)=\ii (E_{m}-1)\ee^{-\ii L(-\ii R)} -\ii(E_{m}-1)(\ee^{-\ii L(-\ii R)}-1) = \ii (E_{m}-1).
\end{equation}
In other words, if $\mathbf{h}(\lambda):=(h_1(\lambda),\dots,h_N(\lambda))^\top$, then $\mathbf{h}(0)=-\ii\cdot\mathbf{d}$, where $\mathbf{d}:=(1-E_1,\dots,1-E_N)^\top$ as defined in the introduction.

It remains to study the integral in the upper left-hand corner of the numerator determinant.  If there exists a non-exceptional index, so by our convention $N$ is non-exceptional, then
\begin{multline}
    \int_\mathbb{R}\ee^{-\ii\lambda z}\ee^{-\ii L(z)}u_0(z)\,\dd z - \int_{C_N}\ee^{-\ii\lambda z}\ee^{-\ii L(z)}u_0(z)\,\dd z\\
\begin{aligned}
    &= -E_{N}\oint_{|z|=R_1>R}\ee^{-\ii\lambda z}\ee^{-\ii L(z)}u_0(z)\,\dd z \\ &=-2\pi\ii E_{N}g(\lambda)\\
    &=-2\pi\ii\ee^{2\pi(c_1+\cdots+c_N)}g(\lambda),\quad \text{for $N$ non-exceptional}.
\end{aligned}
\label{eq:first-row-1}
\end{multline}
Otherwise, without subtracting the lower-left-hand corner element, we close the contour in the lower half-plane and get
\begin{equation}
\begin{aligned}
    \int_\mathbb{R}\ee^{-\ii\lambda z}\ee^{-\ii L(z)}u_0(z)\,\dd z &= -2\pi\ii g(\lambda)\\
    &=-2\pi\ii\ee^{2\pi(c_1+\cdots+c_N)}g(\lambda),\quad\text{for all indices exceptional}.
\label{eq:first-row-1-all-exceptional}
\end{aligned}
\end{equation}

Now we use these results to analyze the numerator determinant $\mathcal{N}(\lambda)$ defined in \eqref{eq:RC-numerator-determinant}. 
If $N$ is non-exceptional, we subtract the last row from the first using \eqref{eq:first-row-2-N+1} and \eqref{eq:first-row-1}; otherwise all indices are exceptional and we simply use \eqref{eq:first-row-2-N+1-all-exceptional} and \eqref{eq:first-row-1-all-exceptional}.  Either way, the result is the same:
\begin{equation}
\mathcal{N}(\lambda)
    =-2\pi\ii \ee^{2\pi(c_1+\cdots+c_N)}\begin{vmatrix}
        g(\lambda) & g_1(\lambda) & \cdots & g_N(\lambda)\\
        \int_{C_1}\ee^{-\ii\lambda z}\ee^{-\ii L(z)}u_0(z)\,\dd z & & & &\\
        \vdots & &\mathbf{M}(\lambda) & &\\
        \int_{C_N}\ee^{-\ii\lambda z}\ee^{-\ii L(z)}u_0(z)\,\dd z & & & &
    \end{vmatrix}.
\end{equation}
Then, using \eqref{eq:Amn-exact} and \eqref{eq:first-column-exact} we express $\mathcal{N}(\lambda)$ in block form using the column vectors $\mathbf{h}(\lambda)$, $\mathbf{g}(\lambda):=(g_1(\lambda),\dots,g_N(\lambda))^\top$, $\mathbf{d}$, and the matrix $\mathbf{H}(\lambda)$:
\begin{equation}
    \mathcal{N}(\lambda)=-2\pi\ii\ee^{2\pi(c_1+\cdots+c_N)}\begin{vmatrix}
        g(\lambda) & \mathbf{g}(\lambda)^\top\\\mathbf{h}(\lambda)+\ln(\lambda)g(\lambda)\mathbf{d} & \mathbf{H}(\lambda)+\ln(\lambda)\mathbf{d}\mathbf{g}(\lambda)^\top
    \end{vmatrix}.
\end{equation}
Adding $(E_{m-1}-1)\ln(\lambda)=-d_{m-1}\ln(\lambda)$ times the first row to row $m$, $m=2,\dots,N+1$ removes all logarithms, yielding
\begin{equation}
\mathcal{N}(\lambda)  
    =-2\pi\ii \ee^{2\pi(c_1+\cdots+c_N)}\begin{vmatrix} g(\lambda) & \mathbf{g}(\lambda)^\top\\
    \mathbf{h}(\lambda) & \mathbf{H}(\lambda)
    \end{vmatrix}.
\end{equation}
Therefore, $\mathcal{N}(\lambda)$ is obviously an analytic function of $\lambda$ at $\lambda=0$, and expanding along the first row gives
\begin{equation}
\mathcal{N}(\lambda) 
    =-2\pi\ii \ee^{2\pi(c_1+\cdots+c_N)}\left(g(\lambda)\det(\mathbf{H}(\lambda))-\sum_{n=1}^Ng_n(\lambda)\det(\mathbf{H}(\lambda)\mathop{\longleftarrow}^n\mathbf{h}(\lambda))\right).   
\label{eq:numerator-determinant}
\end{equation}
Using \eqref{eq:denominator-determinant} and \eqref{eq:numerator-determinant} in \eqref{eq:reflectioncoefficient} gives
\begin{equation}
    \beta(\lambda)=2\pi\frac{\displaystyle g(\lambda)\det(\mathbf{H}(\lambda))-\sum_{n=1}^Ng_n(\lambda)\det(\mathbf{H}(\lambda)\mathop{\longleftarrow}^n\mathbf{h}(\lambda))}{\displaystyle \ln(\lambda)\sum_{n=1}^Ng_n(\lambda)\det(\mathbf{H}(\lambda)\mathop{\longleftarrow}^n\mathbf{d}) + \det(\mathbf{H}(\lambda))}.
\end{equation}
Using $\mathbf{h}(0)=-\ii\cdot\mathbf{d}$, this can be written in the claimed form \eqref{eq:XS-rational-beta-general} where $\varphi_1(\lambda)$, $\varphi_2(\lambda)$, and $\varphi_3(\lambda)$ given by
\begin{equation}
    \varphi_1(\lambda):=\sum_{n=1}^Ng_n(\lambda)\det(\mathbf{H}(\lambda)\mathop{\longleftarrow}^n\mathbf{d}),
\end{equation}
\begin{equation}
    \varphi_2(\lambda):=2\pi\frac{g(\lambda)}{\lambda}\det(\mathbf{H}(\lambda))-2\pi\sum_{n=1}^Ng_n(\lambda)\det\left(\mathbf{H}(\lambda)\mathop{\longleftarrow}^n\left[\frac{\mathbf{h}(\lambda)-\mathbf{h}(0)}{\lambda}\right]\right)
\end{equation}
and
\begin{equation}
    \varphi_3(\lambda):=\det(\mathbf{H}(\lambda)).
\end{equation}
are all analytic at $\lambda=0$, recalling that the entire function $g(\lambda)$ satisfies $g(0)=0$.

Next we show that $\varphi_1(0)=\Delta$.  Since each function $g_n(\lambda)$ satisfies $g_n(0)=1$, we have
\begin{equation}
    \varphi_1(0)=\sum_{n=1}^N\det(\mathbf{H}(0)\mathop{\longleftarrow}^n\mathbf{d}).
\end{equation}
We simplify $H_{mn}(0)$ as follows:
\begin{equation}
    H_{mn}(0)=F_{mn}(0)+(E_{m}-1)\sum_{k=1}^\infty\mu_k^{(n)}\left.\left[c_k(R)(\ii\lambda)^{k-1}-\left(\frac{\ii}{R}\right)^{k-1}\mathop{\sum_{j=0}^\infty}_{j\neq k-1}\frac{(-R\lambda)^j}{j!(j-k+1)}\right]\right|_{\lambda=0}.
\end{equation}
Using \eqref{eq:LaurentSeries} with $z=-\ii R$ and \eqref{eq:Euler}, we find
\begin{equation}
\begin{aligned}
    H_{mn}(0)
    &=\int_{C_m\cap\{|z|\le R\}}\frac{\ee^{-\ii L(z)}\,\dd z}{z-p_n} +(E_{m}-1)\left(\mu_1^{(n)}c_1(R)+\sum_{k=2}^\infty\frac{\mu_k^{(n)}}{k-1}\left(\frac{\ii}{R}\right)^{k-1}\right)\\
    &=\int_{C_m\cap\{|z|\le R\}}\frac{\ee^{-\ii L(z)}\,\dd z}{z-p_n} + (E_{m}-1)\left(\int^{-\ii\infty}_{-\ii R}\left[\frac{\ee^{-\ii L(z)}}{z-p_n}-\frac{1}{z}\right]\,\dd z-\ln(R)-\gamma\right)
\end{aligned}
\end{equation}
In the second integral on the right-hand side, $\ee^{-\ii L(z)}$ refers to the branch that is single-valued for $|z|>R$ and that tends to $1$ as $z\to\infty$, while in the first integral $\ee^{-\ii L(z)}$ refers to the analytic continuation of the same function into $|z|\le R$ along $C_m$.  Now
\begin{equation}
    \int_{-\ii R}^{-\ii\infty}\left[\frac{\ee^{-\ii L(z)}}{z-p_n}-\frac{1}{z}\right]\,\dd z = \int_{-\ii R}^{-\ii\infty}\frac{\ee^{-\ii L(z)}-1}{z-p_n}\,\dd z + \log\left(\frac{-\ii R}{-\ii R-p_n}\right).
\end{equation}
We recall from the introduction the splitting at $z=0$ of the contour $C_m$ into an an initial arc $C_{m}^{-}$ from $z=\infty$ to $z=0$ along which $\ee^{-\ii L(z)}\to 1$ as $z\to\infty$ and a terminal arc $C_{m}^{+}$.  If the index $m$ is non-exceptional, then  $C_{m}^{+}$ goes from $z=0$ to $z=\infty$ along which $\ee^{-\ii L(z)}\to E_{m}$ as $z\to\infty$; otherwise, $C_{m}^{+}$ terminates at $p_m$ and $E_m=0$.  In both cases the results can therefore be combined as
\begin{equation}
\begin{aligned}
    H_{mn}(0)&=\int_{C_{m}^{-}}\frac{\ee^{-\ii L(z)}-1}{z-p_n}\,\dd z + \int_{C_{m}^{+}}\frac{\ee^{-\ii L(z)}-E_{m}}{z-p_n}\,\dd z \\ &\quad\quad+ \int_{C_{m}^{-}\cap \{|z|\le R\}}\frac{\dd z}{z-p_n} + E_{m}\int_{C_{m}^{+}\cap \{|z|\le R\}}\frac{\dd z}{z-p_n} \\
    &\qquad\qquad+ (E_{m}-1)\left(\log\left(\frac{-\ii R}{-\ii R-p_n}\right)-\ln(R)-\gamma\right)\\
    &=\int_{C_{m}^{-}}\frac{\ee^{-\ii L(z)}-1}{z-p_n}\,\dd z + \int_{C_{m}^{+}}\frac{\ee^{-\ii L(z)}-E_{m}}{z-p_n}\,\dd z \\ &\quad\quad+2\pi\ii E_{m}\delta_{m\ge n}\\
    &\qquad\qquad+(E_{m}-1)\left(\log\left(\frac{-\ii R}{-\ii R-p_n}\right)-\ln(R)-\gamma-\int_{C_{m}^{-}\cap\{|z|\leq R\}}\frac{\dd z}{z-p_n}\right)\\
    &=\int_{C_{m}^{-}}\frac{\ee^{-\ii L(z)}-1}{z-p_n}\,\dd z + \int_{C_{m}^{+}}\frac{\ee^{-\ii L(z)}-E_{m}}{z-p_n}\,\dd z \\ &\quad\quad+2\pi\ii E_{m}\delta_{m\ge n} + (E_{m}-1)\left(-\frac{\ii\pi}{2}-\gamma-\log(-p_n)\right).
\end{aligned}
\end{equation}
Since the last term is a contribution to each column of $\mathbf{H}(0)$ that is a multiple of $\mathbf{d}$, it makes no contribution to $\displaystyle \det(\mathbf{H}(0)\mathop{\longleftarrow}^n\mathbf{d})$ for any $n$; therefore we may also write 
\begin{equation}    \varphi_1(0)=\sum_{n=1}^N\det(\mathbf{J}\mathop{\longleftarrow}^n\mathbf{d}),
\end{equation}
where $\mathbf{J}$ is the $N\times N$ matrix whose elements are defined by \eqref{eq:XS-J-def}.  Comparing with \eqref{eq:XS-DeltaN} then proves that $\varphi_1(0)=\Delta$ as desired.

Finally, we observe that according to the representation \eqref{eq:XS-rational-beta-general},
\begin{equation}
    \Delta=\varphi_1(0)\neq 0\implies \beta(\lambda)=\frac{2\pi\ii}{\ln(\lambda)} + O\left(\frac{1}{\ln(\lambda)^2}\right),\quad \lambda\downarrow 0.
\end{equation}
This proves the genericity of the initial datum $u_0$ and completes the proof of Theorem~\ref{thm:XS-generic}.

\bibliographystyle{siamplain}
\bibliography{references}

\end{document}